\crefname{hypothesis}{Hypothesis}{Hypotheses}
\title{Learning optimal multigrid smoothers via neural networks\thanks{Submitted to the editors \today.
\funding{This work was supported by NSF grant OAC 2003720.}}}
\author{Ru Huang\thanks{Department o Mathematics, Emory University, Atlanta, GA}
\and Ruipeng Li\thanks{Center  for Applied  Scientific Computing,
    Lawrence  Livermore National  Laboratory,  P. O.  Box 808,  L-561,
    Livermore,   CA  94551   {(\tt{li50@llnl.gov})}.  This   work  was
    performed under the  auspices of the U.S. Department  of Energy by
    Lawrence    Livermore   National    Laboratory   under    Contract
    DE-AC52-07NA27344.}
\and Yuanzhe Xi\footnotemark[2]}
\def\RR{\mathbb{R}}
\def\inv{^{-1}}%
\def\trans{^{\mathsf{T}}}
\def\Ran{\textnormal{\mbox{Ran}}}
\providecommand{\norm}[1]{\lVert#1\rVert}
\newcommand{\eq}[1]{\begin{equation}\label{#1}}
\newcommand{\en}{\end{equation}}
\begin{document}
\maketitle
\begin{abstract}
Multigrid methods are one of the most efficient techniques for solving large sparse linear systems arising from Partial Differential Equations (PDEs) 
and graph Laplacians from machine learning applications. One of the key components of multigrid is smoothing, 
which aims at reducing high-frequency errors on each grid level. 
However, finding optimal smoothing algorithms is problem-dependent and can impose challenges for many problems. 
In this paper, we propose an efficient adaptive framework for learning optimized smoothers from operator stencils
in the form of convolutional neural networks (CNNs).
The CNNs are trained on small-scale problems from a given type of PDEs based on a supervised loss function derived from multigrid convergence theories,
and can be applied to large-scale problems of the same class of PDEs.
Numerical results on anisotropic rotated Laplacian problems and variable coefficient diffusion problems
demonstrate improved convergence rates and solution time compared with classical hand-crafted relaxation methods.
\end{abstract}

\section{Introduction}
Partial Differential Equations (PDEs) play important roles in modeling various phenomena in many fields of science and engineering. 
Their solutions are typically computed numerically, when the closed-form solutions are not easily available, which
 leads to large-scale and ill-conditioned sparse linear systems to solve.  
In machine learning applications such as spectral clustering, graph-based semi-supervised learning and transportation network flows, 
solving large-scale linear systems associated with graph Laplacians is often needed. 
The development of efficient  linear solvers  is still an active research area nowadays \cite{Saad2020IterativeMF,DBLP:journals/actanum/Wathen15,smash}. 

Among many numerical solution schemes, multigrid methods often show superior efficiency and scalability especially for solving elliptic-type PDE and graph Laplacian problems \cite{brandt1984algebraic,ruge1983algebraic,briggs2000multigrid,trottenberg2000multigrid}. 
Fast convergence of multigrid  is achieved by exploiting  hierarchical grid structures to eliminate errors of all modes by smoothing and coarse-grid correction 
at each grid level. Thus, the performance of multigrid methods  highly depends on the smoothing property of a chosen smoother. 
However, the design of optimal smoothing algorithm is problem-dependent and often too complex to be achieved even by domain experts. 
In this paper, we propose an adaptive framework for training optimized smoothers via convolutional neural networks (CNNs), 
which directly learns a mapping from operator stencils to the inverses of the smoothers. 
The training process is guided by multigrid convergence theories for good smoothing properties on eliminating high-frequency errors.
Multigrid solvers equipped with the proposed smoothers inherit the convergence guarantees 
and scalability from standard multigrid algorithms and can show improved performance on anisotropic 
rotated Laplacian problems that are typically challenging for classical multigrid methods.
Numerical results demonstrate that a well-trained CNN-based smoother   
can damp high-frequency errors more rapidly and thus lead to a faster convergence of multigrid than traditional relaxation-based smoothers. 
Another appealing property of the proposed smoother and the training framework is 
the ability of generalization to problems of much larger sizes and more complex geometries.

\subsection{Related work}
There is an increasing interest in leveraging machine learning techniques to solve PDEs in the past few years. Several researchers have proposed to use
machine learning techniques to directly approximate the solutions of PDEs. For example, \cite{lagaris1998artificial} first proposed to use neural networks (NNs) to approximate
the solutions for both Ordinary Differential Equations (ODEs) and PDEs with a fixed boundary condition. Later,
\cite{tang2017study} utilized CNNs to solve Poisson equations with a simple geometry and \cite{berg2018unified} extended the techniques to more complex geometries.
\cite{han2018solving,sirignano2018dgm}  applied machine learning techniques to solve high dimensional PDEs, and \cite{wei2019general} focused on applying reinforcement
learning to solve nonlinear PDEs. \cite{sun2003solving} used parameterized realistic volume conduction models to solve Poisson equations and \cite{holl2020learning} trained a
NN to plan optimal trajectories and control the PDE dynamics and showed numerical results for solving incompressible Navier-Stokes equations.

Orthogonal to the above methods, a few studies have focused on leveraging NNs to improve the performance of existing solvers. For example, \cite{schmitt2019optimizing} developed optimization techniques for
geometric multigrid based on evolutionary computation.
\cite{mishra2018machine} generalized existing numerical methods as NNs with a set of trainable parameters. \cite{katrutsa2017deep} proposed a deep learning method to optimize
the parameters of prolongation and restriction matrices in a two-grid geometric multigrid scheme by minimizing the spectral radius of the iteration matrix.
\cite{greenfeld2019learning} used NNs to learn prolongation matrices in multigrid  in order to solve diffusion equations without retraining and \cite{luz2020learning}
generalized this framework to algebraic multigrid (AMG) for solving unstructured problems.

Meanwhile, researchers have also explored relationships between CNNs and differential equations to design better NN architectures.
For instance, \cite{he2019mgnet} designed MgNet which uses multigrid techniques to improve CNNs.
\cite{haber2018learning,chang2017multi} scaled up CNNs by interpreting the forward propagation as nonlinear PDEs.

Here, we would like to highlight the work \cite{hsieh2018learning}, which proposes to use CNNs and U-net \cite{RFB15a}
to learn a correction term to Jacobi method for solving Poisson equations. This approach is shown to preserve strong correctness and convergence guarantees. Since multigrid methods are known to be more scalable than Jacobi, we extend this idea to improve multigrid methods by designing optimal smoothers in this paper.
To the best of our knowledge, our approach is the first attempt to use CNNs to learn the smoother at each level of multigrid with more than two levels
and exhibits good generalization properties to  problems with different sizes, geometries and variable coefficients.

The outline of the paper is organized as follows. In Section \ref{sec:background}, we review the background of the multigrid method and its convergence results. In Section \ref{Learning}, we propose an adaptive learning framework for learning optimized smoothers for constant coefficient PDEs on structured meshes and extend this framework to variable coefficient problems in Section \ref{sec:VariableCoeff}. We provide interpretation of the learned smoothers in Section \ref{inter} and demonstrate the performance of the proposed methods through extensive numerical examples in Section \ref{sec:exp}. Finally, we draw some conclusions in Section \ref{sec:conclusion}.

\section{Preliminaries and theoretical background}
\label{sec:background}
In this section, we review the classical convergence theory of iterative methods for solving the following linear system of equations 
\begin{equation} \label{eq:Au=f}
 Au =f,
\end{equation}
where $A\in \RR^{n\times n}$ is symmetric positive definite (SPD) and 
$u,f \in \RR^{n}$.  
Iterative methods generate  a sequence of improving approximations  to the solution of \eqref{eq:Au=f},  in which the approximate solution $u_k$ at iteration $k$ depends on the previous ones. Formally, an iterative solver can be expressed as:
\begin{equation}
u_{k}=\Phi(u_0,f,k),
\end{equation}
where the solver $\Phi: \mathbb{R}^{n}\times\mathbb{R}^{n}\times\mathbb{Z}\rightarrow \mathbb{R}^{n}$ is an operator that takes the initial guess $u_0$, right-hand side vector $f$ and generates $u_k$ at iteration $k$.

\subsection{Relaxation methods}
Iterations based on relaxation schemes can be written as
\begin{align}\label{eq:iteration}
u_{k+1} &= (I-M\inv A) u_{k} + M\inv f \notag \\
        &= G u_{k} + M\inv f, \quad G=I-M\inv A,
\end{align}
where $M$ is the relaxation matrix and $G$ is the iteration matrix.
Standard relaxation approaches include weighted Jacobi method with $M=\omega\inv D$ where $D$ denotes the diagonal of $A$ and
Gauss-Seidel method with $M=D-L$ where $-L$ is the strict lower triangular part of $A$.
Denoting by $e_k=u_{\ast}-u_k$ the error at iteration $k$, where $u_{\ast}$ is the exact solution of \eqref{eq:Au=f}, it follows that
$e_k = G^k e_0$.
The following theorem gives a general convergence result for $\lim_{k\rightarrow \infty}{e_k}=0$.

\begin{theorem}[{\cite[Theorem 4.1]{saadbook2}}]
Denote by $\rho(G)$ the spectral radius of $G$.
The iteration \eqref{eq:iteration} converges for any initial vector $u_0$ if and only if $\rho(G)<1$.
\label{thm:fixed}
\end{theorem}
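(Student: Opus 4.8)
The plan is to recast the convergence statement as a statement about powers of the iteration matrix and then handle the two implications separately. Since $e_k = G^k e_0$, the iteration converges for every starting vector $u_0$ precisely when $G^k e_0 \to 0$ for every $e_0 \in \RR^n$, and this is in turn equivalent to $G^k \to 0$ as a matrix (letting $e_0$ range over the standard basis vectors recovers the columns of $G^k$). So the theorem reduces to showing that $G^k \to 0$ if and only if $\rho(G) < 1$.

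For necessity (convergence $\Rightarrow \rho(G) < 1$), I would argue by contraposition. Suppose $\rho(G) \ge 1$, and let $\lambda$ be an eigenvalue with $|\lambda| = \rho(G) \ge 1$ and associated eigenvector $v \ne 0$. Choosing $e_0 = v$ gives $e_k = G^k v = \lambda^k v$, so $\norm{e_k} = |\lambda|^k \norm{v} \ge \norm{v} > 0$, which does not tend to zero. Hence the iteration fails to converge for the corresponding initial guess $u_0 = u_\ast - v$, which is the contrapositive of the claim.

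For sufficiency ($\rho(G) < 1 \Rightarrow G^k \to 0$), the cleanest route is through the Jordan canonical form. Writing $G = X J X\inv$, we have $G^k = X J^k X\inv$, so it suffices to show $J^k \to 0$. The matrix $J^k$ is block diagonal with blocks $J_i^k$, where each Jordan block $J_i$ has a single eigenvalue $\lambda_i$ with $|\lambda_i| \le \rho(G) < 1$. A direct computation shows that the entries of $J_i^k$ are, up to binomial factors, of the form $\binom{k}{j}\lambda_i^{k-j}$; the decisive point is that geometric decay dominates polynomial growth, i.e. $k^j |\lambda_i|^k \to 0$ whenever $|\lambda_i| < 1$, so every entry of $J^k$ vanishes in the limit. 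Consequently $G^k \to 0$, as desired.

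The main obstacle is precisely this sufficiency direction when $G$ is not diagonalizable: one cannot naively bound $\norm{G^k}$ by $\rho(G)^k$, since nontrivial Jordan blocks introduce polynomial-in-$k$ factors, and the argument therefore hinges on the elementary but essential limit $k^j |\lambda_i|^k \to 0$ for $|\lambda_i| < 1$. An alternative that sidesteps the Jordan form is to invoke the fact that for any $\epsilon > 0$ there exists an induced matrix norm with $\norm{G} \le \rho(G) + \epsilon$; choosing $\epsilon$ so that $\rho(G) + \epsilon < 1$ then gives $\norm{G^k} \le \norm{G}^k \to 0$ immediately.
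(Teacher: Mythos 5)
Your proof is correct. The paper does not prove this statement at all---it is imported verbatim as Theorem 4.1 of the cited reference (Saad)---and your argument is essentially the proof given there: reduce convergence for all $u_0$ to $G^k \to 0$, get necessity from an eigenvector of a dominant eigenvalue, and get sufficiency from the Jordan form (or, equivalently, from the existence of an induced norm with $\norm{G} \le \rho(G) + \epsilon < 1$). The only point worth tightening is in the necessity direction: for a real matrix $G$ the dominant eigenvalue $\lambda$ and its eigenvector $v$ may be complex, so $e_0 = v$ is not an admissible real initial error; this is harmless because you have already reduced the claim to $G^k \to 0$, and entrywise convergence $G^k \to 0$ forces $G^k v = \lambda^k v \to 0$ for complex $v$ as well (or one may take $e_0$ to be the real or imaginary part of $v$, at least one of which yields a non-vanishing error sequence).
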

Notice that $\rho(G)$ represents the asymptotic convergence rate, 
which, however, does not, in general, predict error reduction for a few iterations \cite{briggs2000multigrid}. When relaxation methods are used as multigrid smoothers, they are typically applied $O(1)$ times in each smoothing step. Thus, the convergent smoothers defined as follows can guarantee a better smoothing effect.
\begin{definition}[Convergent smoother in energy norm]
Assuming $A$ is SPD, relaxation matrix $M$ is called a \emph{convergent smoother} in the energy norm if $\norm{Ge_k}_A < \norm{e_k}_A, \forall e_k$, where $G=I-M^{-1}A$ and $\norm{x}_A^2={x\trans Ax}$.
\end{definition}
It can be shown that $M$ is a convergent smoother if and only if $\norm{G}_A<1$ or $M\trans+M-A$ is SPD. Since $\rho(G)$ is easier to compute than $\norm{G}_A$ and $\rho(G)<1$ is a necessary condition for both asymptotic convergence and single-iteration convergence, $\rho(G)$ is still often used as a metric of convergence rate of smoothers. 

Though relaxation schemes can have very slow convergence when being used as a solver, they are known to be very efficient
for smoothing the error. That is, after a few iterations, the remaining error varies slowly relative to the mesh grid, and thus can be approximated well on a coarser grid. This property is explored in multigrid methods as discussed in the next section.

\subsection{Multigrid methods}
Multigrid methods exploit a hierarchy of grids with exponentially decreasing numbers of degrees of freedom on coarser levels, starting with the original problem on the finest level. On each level, the computational cost is proportional to the problem size, therefore, the overall complexity is still linear.
Smoothing and coarse-grid correction are the two main components of multigrid, which are designed to be complementary to each other in order to achieve fast convergence, i.e., they aim at eliminating ``high-frequency'' (oscillatory) and ``low-frequency'' (smooth) errors respectively,  where high- and low-frequency errors usually correspond to eigenvectors of $M\inv A$ with large and small eigenvalues.
Relaxation-based approaches such as weighted Jacobi and Gauss-Seidel are typical choices of multigrid smoothers  as these methods 
are inexpensive to apply and can effectively remove high-frequency errors for elliptic type PDEs.
On the other hand, the effectiveness of coarse-grid correction on low-frequency errors is due to the fact that  smooth errors can be interpolated accurately. 

When dealing with hard problems such as ones with irregular anisotropy, anisotropy not aligned along the coordinate axes, or complex geometries,
 efficiency of traditional smoothers can deteriorate, in which cases, stronger and often more expensive smoothers  are  needed such as
block smoothers \cite{doi:10.1080/00207169008803864,https://doi.org/10.1002/pamm.201210311}, ILU-based smoothers \cite{doi:10.1137/0910043} and smoothers based on Krylov methods \cite{doi:10.1137/0722038,Lin2020}.
Nevertheless, finding robust and efficient smoothers still remains a challenging problem for multigrid.


Convergence theory of two-grid methods has been well studied \cite{BRANDT198623,amge,gamg,xu_zikatanov_2017} through the error propagation operator $E_{\mathrm{TG}}$ of the form:
\begin{equation} \label{eq:twogrid}
E_{\mathrm{TG}} = (I-M\inv A)(I-P(P\trans A P)\inv P\trans A),
\end{equation}
where $M$ is the smoother, $P\in \RR^{n\times n_c}$ is the prolongation operator, $P\trans$ is typically used as the restriction operator for symmetric problems, and $P\trans A P$ is the 
Galerkin coarse-grid operator. In general, smaller $\Vert E_{\mathrm{TG}}\Vert_A$ indicates faster convergence for two-grid methods.

In this paper we choose standard prolongation operators $P$ and only focus on using CNNs to parameterize $M$. 
The following theorem summarizes the main convergence result in \cite{gamg} with respect to $M$ and $P$.
\begin{theorem}[{\cite{gamg}}]
Assuming $M\trans + M -A$ is SPD, denote by
\begin{equation} \label{eq:symM}
\tilde M = M\trans (M\trans + M -A)\inv M,
\end{equation}
the symmetrized smoother. Let $R\in \RR^{n_c \times n}$ be any matrix such that $RP=I$ and
\begin{equation} \label{eq:K}
  K = \max_{e\neq 0} \frac{\norm{(I-PR)e}_{\tilde M}^2}{\norm{e}_A^2}.
\end{equation}
We have $K \ge 1$ and $\norm{E_{\mathrm{TG}}}_A \le \left(1 - {1}/{K}\right)^{1/2}.$
\end{theorem}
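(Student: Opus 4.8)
The plan is to peel off the coarse-grid correction using its $A$-orthogonal projection structure, reduce $\norm{E_{\mathrm{TG}}}_A$ to the action of the smoother on the $A$-orthogonal complement of $\Ran(P)$, and then recognize the resulting sharp constant as a dual form of the $K$ in \eqref{eq:K}. First I would set $\pi = P(P\trans A P)\inv P\trans A$ and $G = I - M\inv A$, so that \eqref{eq:twogrid} reads $E_{\mathrm{TG}} = G(I-\pi)$. The operator $I-\pi$ is an idempotent, $A$-self-adjoint projection, so as $e$ ranges over $\RR^n$ the image $(I-\pi)e$ ranges over $\Ran(I-\pi) = \{v : P\trans A v = 0\}$, while $\norm{e}_A \ge \norm{(I-\pi)e}_A$. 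Hence the supremum defining $\norm{E_{\mathrm{TG}}}_A$ is attained on that subspace, giving
\[ \norm{E_{\mathrm{TG}}}_A = \sup_{0 \neq v,\; P\trans A v = 0} \frac{\norm{Gv}_A}{\norm{v}_A}. \]

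Second, I would establish the smoothing energy identity $A - G\trans A G = A\tilde M\inv A$, i.e. $\norm{Gv}_A^2 = \norm{v}_A^2 - (Av)\trans \tilde M\inv (Av)$, by expanding $G\trans A G$ and collecting terms into the symmetrized smoother $\tilde M$ of \eqref{eq:symM}; this is exactly where the assumption that $M\trans + M - A$ is SPD enters, since it makes $\tilde M$ SPD. Nonnegativity of $\norm{Gv}_A^2$ then forces $(Av)\trans \tilde M\inv (Av) \le \norm{v}_A^2$ for all $v$, equivalently $\norm{e}_{\tilde M}^2 \ge \norm{e}_A^2$ for all $e$. Substituting the identity into the first step yields the exact equality $\norm{E_{\mathrm{TG}}}_A^2 = 1 - 1/K^\ast$, where after the change of variables $w = Av$ (which turns $P\trans A v = 0$ into $P\trans w = 0$),
\[ K^\ast = \sup_{0\neq v,\; P\trans A v=0} \frac{\norm{v}_A^2}{(Av)\trans \tilde M\inv (Av)} = \sup_{P\trans w = 0} \frac{w\trans A\inv w}{w\trans \tilde M\inv w}. \]

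Third, I would link $K^\ast$ to the stated $K$ through a duality identity: for SPD matrices and full-column-rank $P$,
\[ \sup_{P\trans w=0} \frac{w\trans A\inv w}{w\trans \tilde M\inv w} = \max_{e\neq 0} \frac{\min_{e_c}\norm{e - Pe_c}_{\tilde M}^2}{\norm{e}_A^2} = \max_{e\neq 0}\frac{\norm{(I-\pi_{\tilde M})e}_{\tilde M}^2}{\norm{e}_A^2}, \]
where $\pi_{\tilde M} = P(P\trans \tilde M P)\inv P\trans \tilde M$ is the $\tilde M$-orthogonal projection onto $\Ran(P)$, whose optimality realizes the inner minimization. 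For any admissible $R$ with $RP = I$, the vector $PRe$ is one particular coarse interpolant, so $\norm{(I-PR)e}_{\tilde M} \ge \norm{(I-\pi_{\tilde M})e}_{\tilde M}$ for every $e$; maximizing the ratio gives $K \ge K^\ast$, whence $\norm{E_{\mathrm{TG}}}_A^2 = 1 - 1/K^\ast \le 1 - 1/K$, the claimed bound. The inequality $K \ge 1$ is immediate and needs none of the duality machinery: since $R \in \RR^{n_c\times n}$ with $n_c < n$ and $RP = I$ has a nontrivial kernel, any $0 \neq e \in \Ker R$ obeys $(I-PR)e = e$, so the ratio in \eqref{eq:K} equals $\norm{e}_{\tilde M}^2 / \norm{e}_A^2 \ge 1$ by the estimate $\norm{e}_{\tilde M}^2\ge\norm{e}_A^2$ from the second step.

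The main obstacle is the duality identity of the third step — rewriting a Rayleigh quotient constrained to $\{P\trans w = 0\}$ as an unconstrained complementary quotient in the inverse metrics together with a coarse-space minimization. I would prove it by a Lagrange-multiplier argument enforcing $P\trans w = 0$, which reduces to a Schur-complement computation on the associated saddle-point system; this is the one genuinely nontrivial linear-algebra step, whereas the projection reduction of the first step and the expansion of the second are routine, and $K\ge 1$ follows directly.
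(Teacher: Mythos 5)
You should know that the paper itself contains no proof of this theorem---it is quoted from \cite{gamg}---so the comparison here is against the standard argument in that reference, and your outline reproduces its architecture faithfully: the reduction of $\norm{E_{\mathrm{TG}}}_A$ to the subspace $\{v: P\trans Av=0\}$ via the $A$-orthogonal projection, the energy identity for the smoother, the duality converting the constrained Rayleigh quotient into the weak approximation constant realized by the optimal $\tilde M$-orthogonal coarse projection, the comparison $\norm{(I-PR)e}_{\tilde M}\ge\norm{(I-\pi_{\tilde M})e}_{\tilde M}$, and $K\ge1$ from positive semidefiniteness of $\tilde M-A$. The duality step you single out is indeed the only nontrivial piece, and both your Lagrange-multiplier plan and the elementary identity $\min_{e_c}\norm{e-Pe_c}_{\tilde M}^2=\sup_{P\trans w=0}(w\trans e)^2/(w\trans\tilde M\inv w)$ settle it.

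The one genuine gap is a transpose in your second step. Expanding $G\trans AG$ for $G=I-M\inv A$ gives $A-G\trans AG=AM\invt(M\trans+M-A)M\inv A=A\left(M(M\trans+M-A)\inv M\trans\right)\inv A$, which is \emph{not} $A\tilde M\inv A$ with the $\tilde M$ of \eqref{eq:symM} unless $M$ is symmetric: $M(M\trans+M-A)\inv M\trans$ and $M\trans(M\trans+M-A)\inv M$ differ in general (already for a $2\times2$ triangular $M$ with $A=I$). Consequently your exact equality $\norm{E_{\mathrm{TG}}}_A^2=1-1/K^\ast$ holds with $K^\ast$ built from the wrong symmetrization, and the final step $K\ge K^\ast$ no longer follows, because the two quadratic forms are not ordered against each other. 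The matrix in \eqref{eq:symM} is the symmetrization of the composition $(I-M\invt A)(I-M\inv A)$, so it is the constant that governs $(I-M\invt A)(I-\pi)$ and its $A$-adjoint $(I-\pi)(I-M\inv A)$---the pre-smoothing ordering---whereas the post-smoothing operator written in \eqref{eq:twogrid} leads to $M(M\trans+M-A)\inv M\trans$. The repair is to run your argument on $(I-M\invt A)(I-\pi)$, for which \eqref{eq:symM} is the natural symmetrization, or to observe that for symmetric smoothers such as weighted Jacobi the two symmetrizations coincide and your proof is complete as written. Your $K\ge1$ argument survives in either case, since both symmetrizations dominate $A$; the $\tilde M$ version follows from $\norm{(I-M\invt A)v}_A^2\ge0$ rather than from $\norm{Gv}_A^2\ge0$.
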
 
The quantity $K$ in \eqref{eq:K}, which is the so-called \emph{weak approximation property} \cite{citeulike:13797291}, essentially measures how accurately interpolation approximates the eigenvectors of $M\inv A$ proportional to the corresponding eigenvalues.
The optimal $K$ yields an \emph{ideal uniform bound of convergence rate},
which is often used to analyze convergence rate of smoothers in two-grid methods \cite{ultra_smooth}. 
\begin{definition}[Ideal uniform convergence bound]
\label{eq:Kstar2}
Suppose $P$ takes form  $P=\begin{psmallmatrix} W \\ I \end{psmallmatrix}$ as in standard multigrid algorithms, 
where $R=\begin{pmatrix} 0 & I \end{pmatrix}$ and $S\trans=\begin{pmatrix} I & 0 \end{pmatrix}$. 
Denoting by $K_*$ the minimum $K$  in \eqref{eq:K} over $P$, 
we define quantity $\beta_{*}$ such that
\begin{equation} \label{eq:beta_star}
\beta_{*}^2=(1-1/K_{*}) = [1-{\lambda_{\min}((S\trans \tilde M S)\inv (S\trans A S))}],
\end{equation}
which can be considered as the ideal uniform bound of convergence rate \cite{gamg}.
\end{definition}

Extension from two-grid methods to multigrid methods is straightforward. This can be done by recursively applying two-grid methods on the coarse-grid system, see Algorithm~\ref{alg:multigrid} for a brief description of standard multigrid V-cycle. Notice that the smoother $M^{(l)}$ at level $l$ is only required to eliminate errors that are $A^{(l)}$-orthogonal to $\Ran(P^{(l)})$ in order to have fast convergence. This property will be used to design efficient training strategies for learning neural smoothers in the next section.

\begin{algorithm}[h]
   \caption{Multigrid V-cycle for solving $Au=f$}
   \label{alg:multigrid}
\begin{algorithmic}[1]
   \STATE Pre-smoothing: $u^{(l)}= u^{(l)} + (M^{(l)})^{-1} (f^{(l)} - A^{(l)}u^{(l)})$
   \STATE Compute fine-level residual: $r^{(l)}=f^{(l)}-A^{(l)}u^{(l)}$, and restrict it to the coarse level: $r^{(l+1)}=(P^{(l)})\trans r^{(l)}$  
   \IF{$l+1$ is the last level}
   \STATE Solve $A^{(l+1)}u^{(l+1)}=r^{(l+1)}$
   \ELSE 
   \STATE Call multigrid V-cycle recursively with $l=l+1$, $f^{(l+1)}=r^{(l+1)}$ and $u^{(l+1)}=0$
   \ENDIF
   \STATE Prolongate the coarse-level approximation and correct the fine-level approximation: $u^{(l)}=u^{(l)} + P^{(l)}u^{(l+1)}$
   \STATE Post-smoothing: $u^{(l)}= u^{(l)} + (M^{(l)})^{-1} (f^{(l)} - A^{(l)}u^{(l)})$
\end{algorithmic}
\end{algorithm}

\section{Learning deep neural smoothers for constant coefficient PDEs}\label{Learning}
The convergence of multigrid V-cycle heavily depends on the choice of smoothers.  Classical off-the-shelf smoothers such as weighted Jacobi or Gauss-Seidel exhibit near-optimal performance on simple Poisson equations and generally lose their efficiency on other types of PDEs.  In this section, we formulate the design of smoothers as a learning task and train a single neural network to
parameterize the action of the inverse of the smoother at a given grid level for constant coefficient PDEs discretized on structured meshes. 
The learned smoothers are represented as a sequence of convolutional layers and trained in an adaptive way guided by the multigrid convergence theory. 

\subsection{Formulation}
We define a PDE problem as the combination of PDE class $\mathcal{A}$,  forcing term $\mathcal{F}$ and boundary condition $\mathcal{G}$. To solve the problem numerically on a 2-D square domain, we discretize it on a grid of size $N\times N$, which leads to solving linear system $Au=f$ where $A\in \mathbb{R}^{N^{2}\times N^{2}}$ and $f\in \mathbb{R}^{N^{2}}$.
Our goal is to train smoothers $M^{(0)},\dots,M^{(L-1)}$ on the first $L$
levels of a multigrid solver that has $L+1$ levels.
We assume here that the multigrid solver uses the same smoother for both the pre-smoothing and post-smoothing steps (c.f., lines 1 and 9 in \cref{alg:multigrid}, respectively), and uses direct methods as the coarsest-level solver. 
Denoting by $\Phi^{(0)}$ the multigrid hierarchy from level $0$, the training objective for $\Phi^{(0)}$ is to minimize the error
\begin{equation} \label{eq:overalltrain}
\norm{\Phi^{(0)}(u_{0},f,k)-u_{\ast}}_2
\end{equation}
where $u_{0}$ is a given initial guess, $u_\ast$ is the exact solution, and $u_k=\Phi^{(0)}(u_{0},f,k)$ is the approximate solution 
by performing $k$ steps of V-cycles with $\Phi^{(0)}$.  

The advantage of minimizing \cref{eq:overalltrain} instead of the norm of the associated iteration matrix is that \cref{eq:overalltrain} can be evaluated and optimized more efficiently. For example, in two-grid methods, $\Phi^{(0)}(u_{0},f,k)-u_{\ast}=E_{TG}^k e_0$ for each exact solution $u_{\ast}$ and an arbitrary initial guess $u_0$. When multiple initial guesses are used to minimize \cref{eq:overalltrain} jointly with different iteration number $k$, the convergence property of the trained smoother can be justified by the following theorem,
which shows that when the loss of \cref{eq:overalltrain} is small, the norm of the associated two-grid operator, $E_{\mathrm{TG}}$, should also be small. 
It is easy to see that this property also holds true for multigrid  operators.

\begin{theorem}[{\cite{doi:10.1137/S0895479893243876}}]\label{thm:sbound}
For any matrix $X\in \mathbb{R}^{n\times n}$ and $z\in \mathbb{R}^{n}$ that is uniformly distributed on unit $n$-sphere, we have
\[
\mathop{\mathbb{E}}(n\Vert Xz\Vert_2^2) = \Vert X\Vert_F^2.
\]
\end{theorem}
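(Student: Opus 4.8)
The plan is to reduce the statement to a computation of the second moments of the coordinates of $z$. First I would rewrite the squared norm as a quadratic form, $\|Xz\|_2^2 = z\trans X\trans X z = \sum_{i,j}(X\trans X)_{ij}\,z_iz_j$, and then take expectations term by term using linearity, so that
\[
\mathbb{E}\bigl(\|Xz\|_2^2\bigr) = \sum_{i,j}(X\trans X)_{ij}\,\mathbb{E}(z_iz_j).
\]
After this reduction, the entire result hinges on identifying the matrix of second moments $\mathbb{E}(z_iz_j)$, and the matrix $X$ plays no further role beyond supplying the coefficients $(X\trans X)_{ij}$.

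Second, I would establish that $\mathbb{E}(z_iz_j)=\tfrac1n\,\delta_{ij}$, exploiting the fact that the uniform distribution on the unit sphere is invariant under every orthogonal transformation. For the off-diagonal terms $i\neq j$, the sign flip $z_i\mapsto -z_i$ is an orthogonal map and hence preserves the distribution of $z$; since it sends $z_iz_j$ to $-z_iz_j$, we get $\mathbb{E}(z_iz_j)=-\mathbb{E}(z_iz_j)$ and therefore $\mathbb{E}(z_iz_j)=0$. For the diagonal terms, invariance under coordinate permutations forces all the quantities $\mathbb{E}(z_i^2)$ to share a common value; because $\sum_i z_i^2=1$ holds almost surely on the sphere, summing gives $\sum_i\mathbb{E}(z_i^2)=1$, and the common value must be $1/n$.

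Finally, substituting $\mathbb{E}(z_iz_j)=\tfrac1n\delta_{ij}$ back into the sum collapses it to the diagonal, yielding $\mathbb{E}(\|Xz\|_2^2)=\tfrac1n\sum_i(X\trans X)_{ii}=\tfrac1n\operatorname{tr}(X\trans X)=\tfrac1n\|X\|_F^2$, so multiplying by $n$ gives the claim. The only genuinely nontrivial step is the symmetry argument for the second-moment matrix; once it is in place, the remainder is a routine trace identity. The main obstacle, then, is making the invariance reasoning rigorous, and if a more explicit handle were desired I would instead represent $z$ as $g/\|g\|_2$ for a standard Gaussian vector $g$, whose rotational invariance is immediate and whose coordinate moments are easily computed.
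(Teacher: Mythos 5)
Your proof is correct. The paper itself gives no proof of this statement---it is quoted directly from the cited reference \cite{doi:10.1137/S0895479893243876}---so there is no in-paper argument to compare against. Your route is the standard one and is complete: the reduction to the second-moment matrix $\mathbb{E}(z_iz_j)$, the symmetry argument (sign flips kill the off-diagonal terms, permutation invariance plus $\sum_i z_i^2=1$ gives $\mathbb{E}(z_i^2)=1/n$), and the trace identity $\tfrac1n\operatorname{tr}(X\trans X)=\tfrac1n\Vert X\Vert_F^2$ are all sound. The fallback you mention, representing $z=g/\Vert g\Vert_2$ for Gaussian $g$, is a perfectly good way to make the orthogonal invariance fully rigorous, but the symmetry reasoning as you state it already suffices.
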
 

In this paper, we fix $\mathcal{A}$ but vary $\mathcal{F}$ and $\mathcal{G}$, and learn multigrid smoothers that are appropriate 
for different PDEs from the same class. 
Specifically, we train the multigrid solvers on a small set of discretized problems 
\begin{equation} \label{eq:trainset}
\mathcal{D}=\bigcup_{j=1}^{Q}\{A,f_j, (u_{0})_j, (u_{\ast})_j\}
\end{equation}
with the presumption that the learned smoothers have good generalization properties: the learned smoothers can perform well on 
problems with much larger grid sizes and different geometries.

As a motivating example, we consider the following diffusion problem: 
\begin{equation} \label{eq:constant}
-\nabla \cdot(g\nabla u(x,y))=f(x,y),
\end{equation}
where $g$ is assumed to be constant in this section. We will consider the more general form $g(x,y)$ in the next section. 

Since the stencils for discretizing \eqref{eq:constant} would be identical for constant $g$ on structured meshes, the dynamics of the problems are spatial invariant and  independent of the specific location in the domain. Thus, we can parameterize the action of inverse of the smoother $(M^{(l)})\inv$ by one single convolutional neural network, $H^{(l)}$, with only convolutional layers. This parameterization has several advantages. First, on an $N\times N$ grid, $H^{(l)}$ only requires $O(N^2)$ computation and has a few parameters. Second, $H^{(l)}$ can be readily applied to problems defined on different grid sizes or geometries. Lastly, which is more important, \cref{thm:conv} justifies the use of this parameterization to construct convergent smoothers. 

\begin{theorem}\label{thm:conv}
For one fixed matrix $A$, there exists a finite sequence of convolution kernels $\{\omega^{(j)}\}_{j=1}^{J}$ such that the convolutional factorization $H=\omega^{(J)}*\dots\omega^{(2)}*\omega^{(1)}$ satisfies $\Vert I-HA\Vert_A<1$ indicating $H$ is a convergent smoother.
\end{theorem}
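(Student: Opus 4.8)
The plan is to show that the convolutional class already contains a very simple convergent smoother, and only afterward indicate how the composition structure yields stronger ones. Recall from the remark following the definition of a convergent smoother that, identifying $H$ with $M^{-1}$ and setting $G = I - HA$, the inequality $\|I - HA\|_A < 1$ is exactly the statement that $H$ is a convergent smoother. So it suffices to produce convolution kernels $\{\omega^{(j)}\}_{j=1}^{J}$ whose composition $H$ obeys this bound. The first step is a standard reduction of the energy norm: for any matrix $B$ one has $\|B\|_A = \|A^{1/2} B A^{-1/2}\|_2$, and whenever $B$ commutes with $A$ this collapses to $\|B\|_2$, which equals $\rho(B)$ if $B$ is symmetric.

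Next I would take $J = 1$ and let $\omega^{(1)}$ be the scaled delta stencil, equal to $\alpha$ at its center and $0$ elsewhere; as an operator this convolution is precisely $H = \alpha I$, which manifestly lies in the class and is immune to any boundary or padding subtleties. Then $G = I - HA = I - \alpha A$ commutes with $A$, so the reduction above gives $\|I - \alpha A\|_A = \rho(I - \alpha A) = \max_i |1 - \alpha\lambda_i|$, where the $\lambda_i > 0$ are the eigenvalues of the SPD matrix $A$. Choosing any $\alpha \in (0,\, 2/\lambda_{\max}(A))$ forces $\alpha\lambda_i \in (0,2)$, hence $|1 - \alpha\lambda_i| < 1$ for every $i$, so that $\|I - HA\|_A < 1$. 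This already settles the theorem with a single kernel, using nothing about $A$ beyond symmetric positive definiteness.

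Finally, I would remark that the scalar choice is merely the under-relaxed Richardson smoother, whereas the factorization $H = \omega^{(J)} * \cdots * \omega^{(1)}$ can realize far better smoothers: a composition of convolutions is again a convolution with the convolved kernel, and on a structured mesh the constant-coefficient operator $A$ is itself a shift-invariant convolution, so every polynomial $p(A)$ is representable as such a composition. Taking $p$ to be a scaled-and-shifted Chebyshev polynomial then minimizes $\max_i |1 - p(\lambda_i)\lambda_i|$ over an interval enclosing the spectrum and drives the $A$-norm bound well below $1$. I do not expect a genuine obstacle here, since the existence claim is easy; the only real content is the bookkeeping, namely confirming that the $\alpha$-scaled delta counts as a legitimate network convolution kernel, and that the commutation identity $A^{1/2}(I - \alpha A) A^{-1/2} = I - \alpha A$ is what reduces the energy norm to a spectral radius. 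The positivity $\alpha\lambda_i > 0$ is the single place where the SPD hypothesis on $A$ is actually used.
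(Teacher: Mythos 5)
Your proof is correct, but it takes a genuinely different route from the paper's. The paper argues non-constructively: it invokes the universality property of deep convolutional networks without fully connected layers (citing Zhou, 2020) to assert that a sufficiently deep convolutional factorization $H$ can approximate $A^{-1}$ to arbitrary accuracy, and then uses equivalence of matrix norms to conclude $\Vert I-HA\Vert_A<1$ once the approximation error is small enough. You instead exhibit an explicit witness with $J=1$: the scaled delta kernel giving $H=\alpha I$, i.e.\ the Richardson smoother, together with the reduction $\Vert I-\alpha A\Vert_A=\rho(I-\alpha A)=\max_i|1-\alpha\lambda_i|<1$ for $\alpha\in(0,2/\lambda_{\max}(A))$, which uses only that $A$ is SPD. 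Your argument is more elementary and fully rigorous, whereas the paper's argument leans on an approximation theorem whose norm and depth requirements are left unquantified; on the other hand, the paper's route is the one that actually motivates \emph{deep} factorizations, since it suggests the class can approach the ideal smoother $H=A^{-1}$, while your single-kernel construction only certifies existence (your closing remark about realizing Chebyshev polynomials $p(A)$ as compositions of convolutions partially recovers that motivation, and is consistent with the paper's constant-coefficient, shift-invariant setting). Both proofs establish the stated existence claim; yours does so with strictly weaker hypotheses on the machinery.
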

\begin{proof}
Based on the universality property of deep convolutional neural networks without fully connected layers \cite{ZHOU2020787}, we know that $H$ can approximate the linear operator $A^{-1}$ to an arbitrary accuracy measured by some norms when $k$ is large enough. Thus, theoretically, $HA$ can be very close to an identity mapping  if parameterized properly. Since all matrix norms are continuous and equivalent, $\Vert I-HA\Vert_A$ can be less than $1$ for certain $k$ measured in matrix $A$-norm.
\end{proof}

\subsection{Training and generalization}\label{learning strategy}
In this section, we propose several strategies for training multigrid solvers using CNNs as smoothers. We will also discuss their advantages and disadvantages.

The first training strategy is to train $H^{(l)}$ separately  
for each multigrid level ${l=0, \dots, L-1}$, where
we construct a training set $\mathcal{D}^{(l)}$ similar to \cref{eq:trainset} for the operator $A^{(l)}$. That is, we train $H^{(l)}$ to make iteration~\cref{eq:iteration} convergent  by minimizing the error between the approximate solution obtained at iteration $k$ and the ground truth solution. As suggested in \cite{hsieh2018learning}, we also choose different iteration number $k$, $1 \le k \le b$ in the training, so that $H^{(l)}$ learns to converge at each iteration, where larger $b$ mimics the behavior of solving problems to 
higher accuracy while smaller $b$ mimics
inexpensive smoothing steps in multigrid.  

This  training strategy is simple and the trainings on different levels are totally independent.
However, we found the obtained $H^{(l)}$ usually do not exhibit good smoothing property of reducing high-frequency errors, especially when $H^{(l)}$ is a shallow neural network. This phenomenon is expected since the training strategy does not consider the underneath coarser-grid hierarchy and tries to reduce errors over
the whole spectrum of $A^{(l)}$.
In contrast, a well-trained $H^{(l)}$ with high complexities, deeper in the layers and larger in the convolution kernels, can approximate
the action of the inverse of $A^{(l)}$ well, but using it as a smoother is not efficient nonetheless, and moreover, the training cost will be significantly higher.

A second training approach is to optimize the objective function \cref{eq:overalltrain} directly over $M^{(l)}$ at all levels, $l=0,\ldots,L-1$. 
This approach targets at optimizing convergence of the overall multigrid V-cycles and considers both the smoothing and the coarse-grid correction.
However,  training the CNNs at all levels together turns out to be prohibitively expensive.

Finally, we propose an efficient adaptive training strategy that can impose the smoothing property by recursively 
training the smoothing CNN at a fine level. 
The training process starts from the second coarsest level and is repeatedly applied to the finer levels, given that
the smoothers at coarser levels have been already trained, so that solve with the coarse-grid operator can be replaced with
a V-cycle using the available multigrid hierarchy at one level down.
The adaptive training algorithm is sketched in ~\cref{alg:adap-train}. 

\begin{algorithm}[tb]
   \caption{Adaptive training of multigrid CNN smoothers}
   \label{alg:adap-train}
\begin{algorithmic}[1]
   \STATE {\bfseries Input:} Multigrid hierarchy: number of multigrid levels $L+1$, coarsest-grid solver at level $L$, namely $\Psi^{(L)}$, coefficient matrix $A^{(l)}$ where $A^{(0)}=A$, and interpolation operator $P^{(l)}$, for $l=0, \ldots, L-1$. Size of  training set $Q$. Maximum allowed number of smoothing steps $b$
   \STATE {\bf Output:} Smoothers $H^{(0)},\dots H^{(L-1)}$
   \FOR{$l=L-1,\dots,0$}
   \STATE Construct training set: 
   \[\mathcal{D}^{(l)}=\bigcup_{j=1}^{Q} \{t_j\}, \quad t_j^{(l)}= \{A^{(l)},f^{(l)}_j, (u^{(l)}_{0})_j, (u^{(l)}_\ast)_j\} \]
%
    \STATE Initialize the weights of $H^{(l)}$
   \STATE Perform stochastic gradient descent (SGD) to minimize  loss function: 
   \begin{align} \notag
\sum_{{t_j^{(l)}\in\mathcal{D}^{(l)}, k\sim\mathcal{U}(1,b)}}
   \Vert \Phi^{(l)}((u^{(l)}_{0})_j, f_j^{(l)}, k) - (u^{(l)}_\ast)_j \Vert_2
   \end{align}
   With $\Phi(u_0, f, 0) \equiv u_0$, run forward propagation by
   \begin{align*} 
    \Phi^{(l)}(u_0, f, k) &= \Phi^{(l)}(u_0, f, k-1) + \Psi^{(l)}(r_{k-1}), \\
    r_{k-1} &:= f - A^{(l)} \Phi(u_0, f, k-1), \\
    \Psi^{(l)}(r_{k-1}) &= t_{k-1} + H^{(l)}(r_{k-1}-At_{k-1}), \\
    t_{k-1} &:= H^{(l)}(r_{k-1}) + P^{(l)} \Psi^{(l+1)} ((P^{(l)})\trans s_{k-1}), \\
    s_{k-1} &:= r_{k-1} - A H^{(l)}(r_{k-1}),
   \end{align*}

    and update $H^{(l)}$ by back propagation
   \ENDFOR
    
\end{algorithmic}
\end{algorithm}

\begin{figure}[h]
  \centering
  {\includegraphics[height=0.36\linewidth, width=0.9\linewidth]{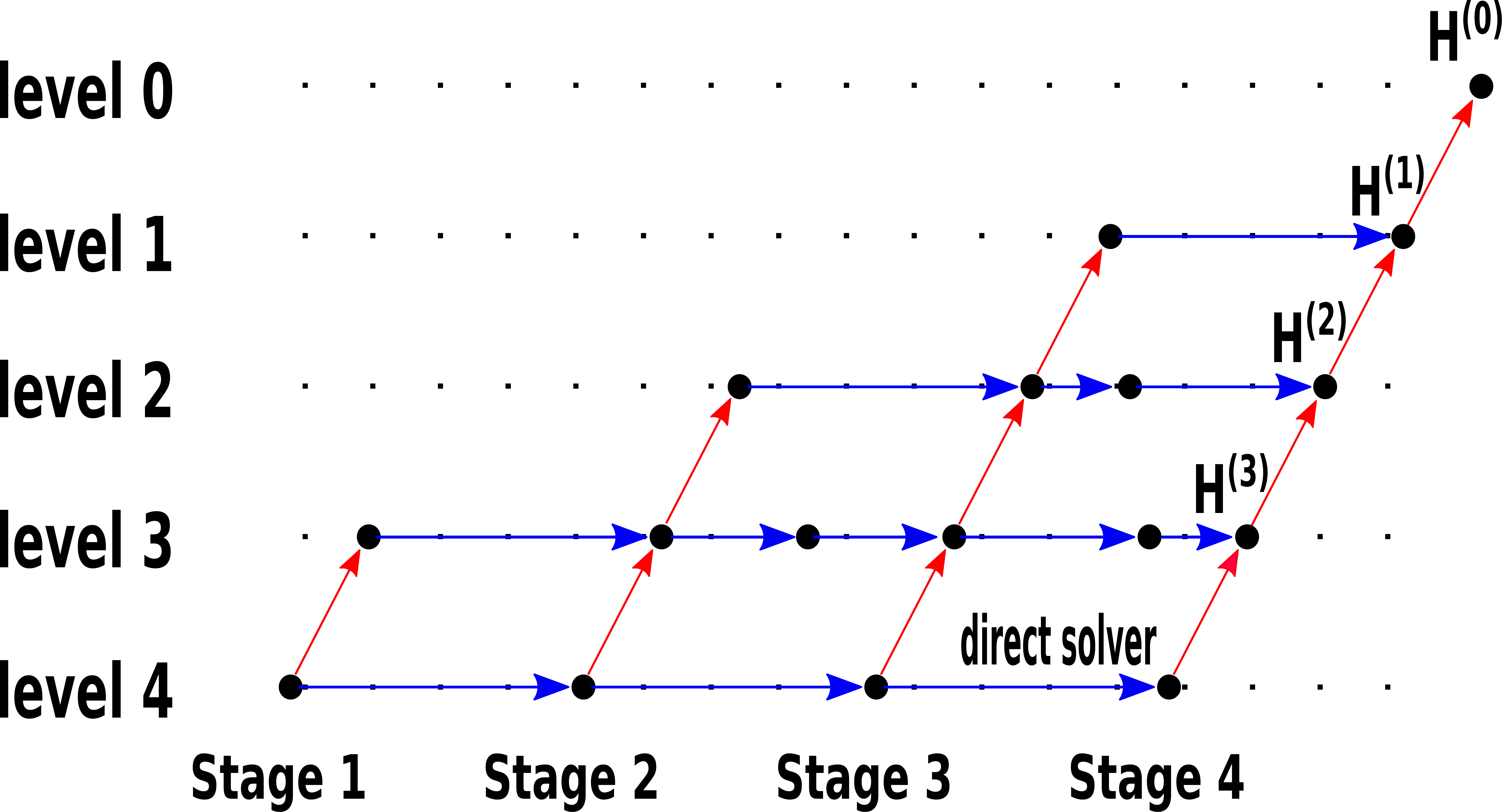}}
  \caption{Proposed adaptive training strategy for a $5$-grid method. The training starts from level $3$ and proceeds upward to level $0$.  When $H^{(l)}$ is being trained, lower level $H^{(j)}$ for $j=l+1,\ldots, 3$ are used in the solve with the coarse-grid operator and remain unchanged.}
  \label{framework}
\end{figure}


~\cref{framework} illustrates the procedure of adaptively training a 5-level multigrid solver in 4 stages, 
starting at level~$3$. The loss is given by 
$$L^{(3)}=\sum_{j,k}\norm{\Phi^{(3)}((u_{0}^{(3)})_j,(f^{(3)})_j,k)-(u_{\ast}^{(3)})_j}_2,$$
where $\Phi^{(3)}$ represents the two-level multigrid with levels $3$ and $4$.
In the second stage, the training proceeds at level $2$ for CNN $H^{(2)}$ utilizing the underlying 2-level hierarchy obtained from the first stage.
This procedure continues until  $H^{(0)}$ is computed at the finest level and the entire training is completed, so the resulting multigrid hierarchy
$\Phi^{(0)}$ can be used for solving systems of equations with $A^{(0)}\equiv A$.

Another appealing property of the proposed training approach is the updatability of smoothers using neural networks.
The trained smoothers can be updated in another training process by
injecting the errors that cannot be effectively reduced by the current multigrid solver back to the training set.
Specifically, to improve the smoothers in a trained multigrid solver $\Phi^{(0)}$, we can first apply $\Phi^{(0)}$ 
to homogeneous equation $Au=0$ 
for $k$ steps with a random initial vector  $u_0$ 
and get the approximate solution $u_k$, i.e., $u_k=\Phi^{(0)}(u_{0},0,k)$, then inject the (restricted) residual,
$r_k^{(l)}=(P^{(l-1)})\trans r_k^{(l-1)}$ with $r_k^{(0)}=-Au_k$
to the training set at each level $l$, and finally re-train $\Phi^{(0)}$  as before with the new augmented training sets 
using the existing $H^{(l)}$ in the multigrid hierarchy as the initial values.

\section{Learning deep neural smoothers for variable coefficient PDEs}
\label{sec:VariableCoeff}
In this section, we extend the adaptive training framework proposed in Section \ref{Learning} to design optimal smoothers for solving variable coefficient PDEs:
\begin{equation}
\label{eq:vcoeff}
-\nabla \cdot(g(x,y)\nabla u(x,y))=f(x,y).
\end{equation}

To better illustrate the difficulty of dealing with variable coefficient PDEs, we simplify our discussion and consider discretizing \eqref{eq:vcoeff} using nine-point stencils with grid spacing $h$. See the left subfigure of \cref{stencil} for a demonstration of $3\times 3$ neighborhood of the grid point $u_{22}$. The equation corresponds to the grid point $u_{22}$ reads:
\begin{align*}
    &-\frac{1}{3h^{2}}(g_{1}u_{11}+g_{2}u_{13}+g_{3}u_{31}+g_{4}u_{33})\\
    &-\frac{1}{6h^{2}}((g_{1}+g_{2})u_{12}+(g_{2}+g_{4})u_{23}+(g_{3}+g_{4})u_{32}+(g_{1}+g_{3})u_{21})\\
    &+\frac{2}{3h^{2}}(g_{1}+g_{2}+g_{3}+g_{4})u_{22}=f_{22}.
\end{align*}

\begin{figure}
    \centering
    \begin{subfigure}[b]{0.35\columnwidth}
    \centering
    \caption{Weight stencils}
    \includegraphics[width=0.6\columnwidth]{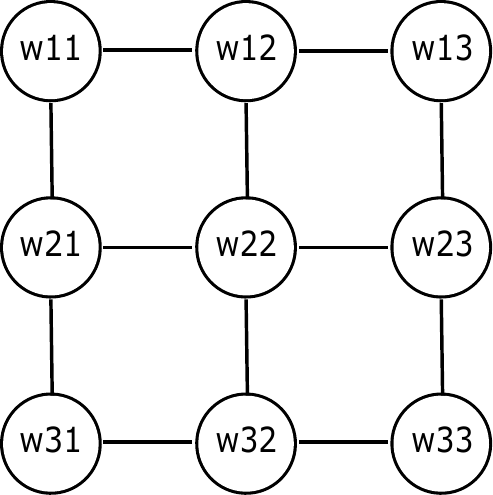}
    \end{subfigure}
    %
    %
    \begin{subfigure}[b]{0.35\columnwidth}
    \centering
    \caption{Grid points}
    \includegraphics[width=0.6\columnwidth]{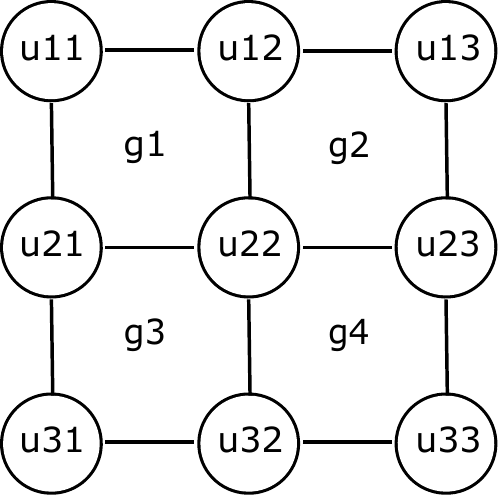}
    \end{subfigure}
    \caption{The architecture of inferring the smoothing kernels for the central point in the stencil. A fully connected neural network takes nine $3\times 3$ stencils as input and outputs three convolution smoothing kernels for the current grid point.}
    \label{stencil}
\end{figure}

This is equivalent to applying a $3\times 3$ weight stencil to the $3\times 3$ neighborhood of $u_{22}$ as 
$$\sum\limits_{i,j}w_{ij}u_{ij}=f_{22},\quad  \ i, j \in\{0,1,2\},$$ where $w_{ij}$ are computed according to the function $g(x,y)$ and is shown in the right subfigure of \cref{stencil}.
When $g(x,y)$ is constant, the coefficients $w_{ij}$ correspond to each interior $3\times 3$ stencil are identical. Thus, we can parameterize $M^{-1}$ by a single convolutional neural network as a stack of convolution kernels $\{\phi_{i}\}$. The weights of each convolution kernel $\phi_{i}$ are shared over all grid points.
However, when $g(x,y)$ is variant, the weight stencils $W_{ij}$ and $W_{lm}$ at two different locations can have completely different dynamics (e.g. $W_{ij}$ can be strong in $x$-axis and weak in $y$-axis while $W_{lm}$ is strong in $y$-axis and weak in $x$-axis). In this case, a smoothing kernel $H$ that is learned to smooth the error at one grid point might be ineffective in smoothing the error at another point. As a result, the optimal smoothing kernel $H_{ij}$ associated with each grid point should be conditioned on the location for variable coefficient problems.

In order to generate unshared convolution kernels which are dimension-invariant, we propose to learn a function which can adaptively adjust the kernels based on the spatial information. In particular, we will design neural network architectures which can map each grid representation to a stack of convolution kernels that can be used to efficiently smooth the error at different locations. 

\subsection{Parameterization with fully connected layers}
In the first approach, we consider using multiple layer perceptron to construct the mapping from the grid representation to the smoothing kernels at each grid point.  Although the stencil at each grid point has already contained the spatial information, we find that only using the stencil information as the representation is not sufficient to learn efficient smoothing kernels and the generalization usually performs poorly. Instead, we suggest to incorporate the neighborhood information into the grid representation. More specifically, we construct each grid representation as an $81\times 1$ vector which consists of the stencils in the $3\times 3$ neighborhood of the current point under consideration. In this case, the feature map $\mathbb{M}$ for an $N\times N$ grid has the size of ${N^{2}\times 81}$.  The mapping is then parameterized by a fully connected neural network which takes the representation of each grid point as input and infers the weights of the $k$ output smoothing kernels of size $3\times 3$.  
See \cref{fig:hypernet} for an illustration of this architecture. To smooth the error at the central point in the stencil, we train a fully connected neural network which takes nine $3\times 3$ stencils with $81$ parameters in total and outputs three $3\times 3$ convolution kernels that are used to smooth the error at this point. On each level of multigrid solver, we only construct one such neural network based on the adaptive training strategy  discussed in Section \ref{learning strategy}.
\begin{figure}
    \centering
    \includegraphics[width=0.5\columnwidth]{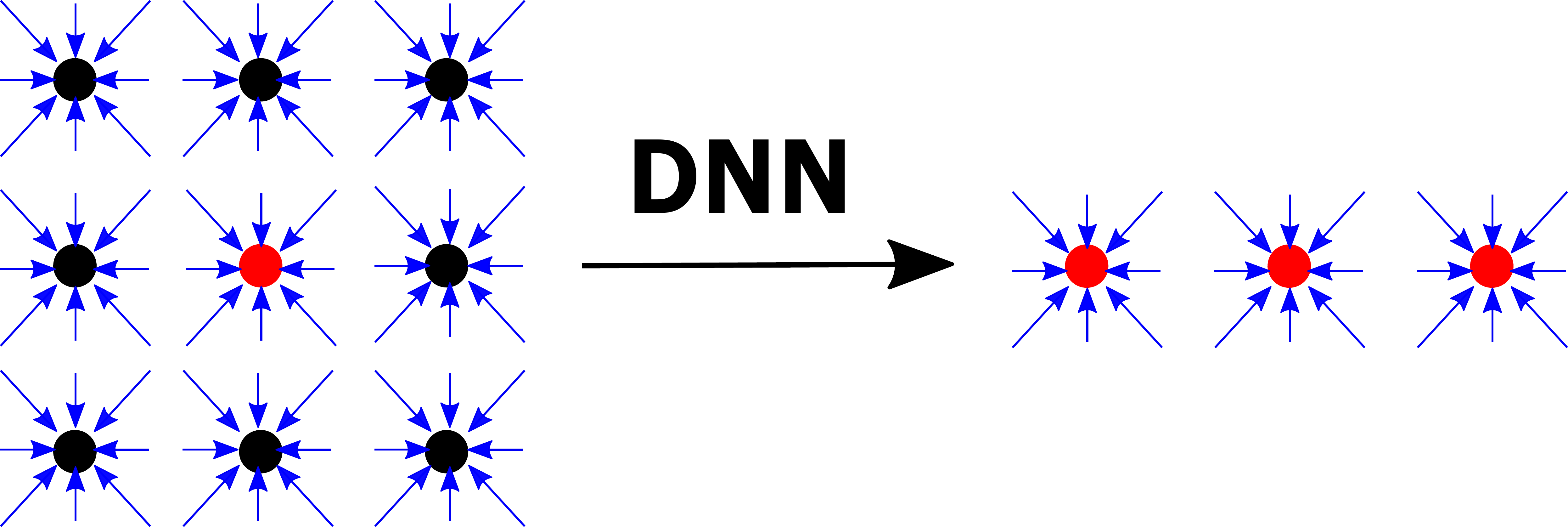}
    \caption{The architecture of inferring the smoothing kernels for the central point in the stencil. A fully connected neural network takes nine $3\times 3$ stencils as input and outputs three convolution smoothing kernels for the current grid point.}
    \label{fig:hypernet}
\end{figure}

\subsection{Parameterization with convolutional layers}
Deep neural networks using fully connected layers often require a large amount of parameters in order to well approximate a function and also have high training cost. In order to reduce the training cost, instead of constructing a feature map $\mathbb{M}\in \mathbb{R}^{N^{2}\times 81}$ by flattening and stacking the stencils  and applying fully connected neural networks, an alternative approach is to feed into the neural network with 9 channels with each channel corresponding to one stencil in the $3\times 3$ neighborhood of the point under consideration. The deep neural network is parameterized by several convolution kernels followed by a fully connected layer. The outputs of the neural network are $k$ smoothing kernels. This architecture is illustrated in \cref{fig: convolution}. We will show in numerical experiments that this approach can achieve a comparable performance with fully connected layers but requires much fewer parameters.

\begin{figure}
    \centering
    \includegraphics[width=0.8\columnwidth]{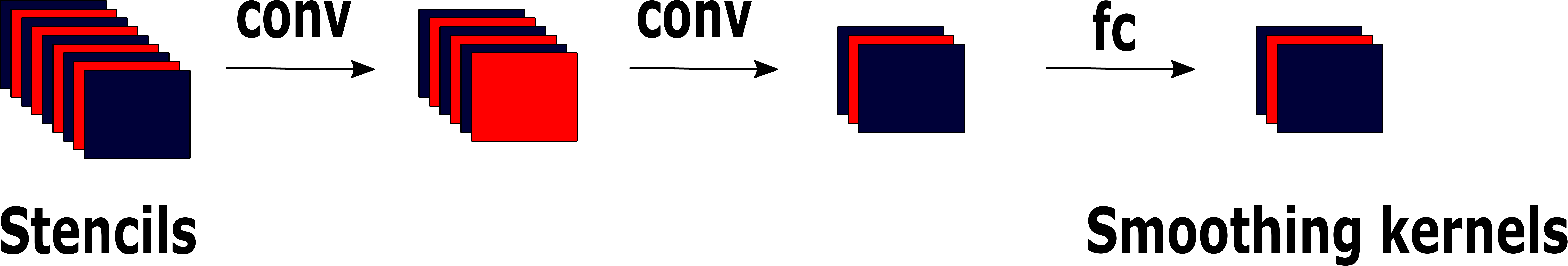}
    \caption{The framework of constructing 3 smoothing kernels by applying two convolutional layers and one fully connected layer to a feature map. The feature maps have $9$,$6$,$3$ and $3$ channels, respectively and each channel contains kernels of size $3\times 3$.}
    \label{fig: convolution}
\end{figure}


\section{Interpretation of learned smoothers}\label{inter}
In this section we present the patterns of the learned smoothing kernels. We consider the anisotropic rotated Laplacian problem \eqref{pde} parameterized by the angle $\theta$ of the anisotropy and conductivity $\xi$. We fix $\xi=100$ and train smoothers for problems with a variety of $\theta \in \{0,\frac{\pi}{12},\frac{\pi}{6},\frac{\pi}{4},\frac{\pi}{3},\frac{5\pi}{12},\frac{\pi}{2}\}$. For each problem, we use a two-grid solver and on the fine level we train a smoother which consists of one convolution kernel of size $9\times 9$. We use linear activation in order to illustrate the action of the convolution kernels as the smoothers. The trained convolution kernels corresponding to different $\theta$ are shown in \cref{fig:patterns1}. The results show that large values in each kernel are gathered symmetrically about the center and the angles of the large values of each kernel also align with the angle of the anisotropy of the problem. These patterns demonstrate that the learned smoothing kernels are able to smooth the error
in correct directions, which can be viewed as line smoothers truncated in the convolution windows along the direction of strong couplings.

We also increase the number of convolutional layers and study the impact of each convolutional layer on the final smoother. For each problem we train three convolution kernels of size $9\times 9$ and show the results in \cref{fig:patterns2}. The first row shows the kernels of the first convolutional layer for each problem while the second row and the third row show the second layer and the third layer respectively. The kernels at different layers exhibit different patterns which indicates that each kernel is responsible for smoothing the error in different regions.  Since applying three $9\times 9$ convolution kernels sequentially is equivalent to applying a $25\times 25$ convolution kernel, we illustrate the patterns of the effective  $25\times 25$ kernels in the last row of \cref{fig:patterns2}. The kernels in the last row display similar patterns as in \cref{fig:patterns1} which perfectly align with the anisotropy of the problem.
\begin{figure}[h]
\begin{center}
\includegraphics[width=0.13\columnwidth]{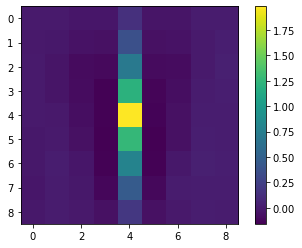}
\includegraphics[width=0.13\columnwidth]{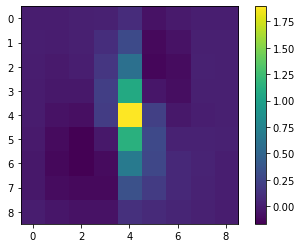}
\includegraphics[width=0.13\columnwidth]{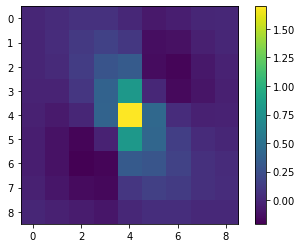}
\includegraphics[width=0.13\columnwidth]{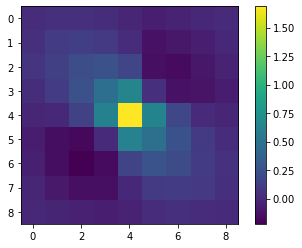}
\includegraphics[width=0.13\columnwidth]{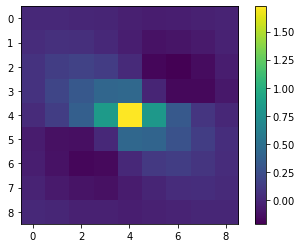}
\includegraphics[width=0.13\columnwidth]{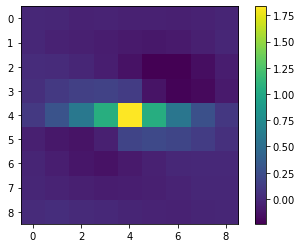}
\includegraphics[width=0.13\columnwidth]{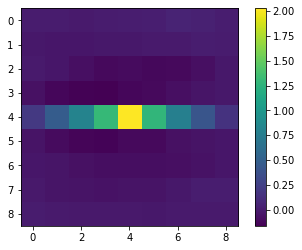}
   \caption{The patterns of the trained kernels on problems where $\xi=100$ and $\theta \in \{ 0,\frac{\pi}{12},\frac{\pi}{6},\frac{\pi}{4},\frac{\pi}{3},\frac{5\pi}{12},\frac{\pi}{2} \}$. For each problem, the smoothers are trained using only one kernel.
}
\label{fig:patterns1}
\end{center}
\end{figure}

\begin{figure}[h]
\begin{center}
\includegraphics[width=0.13\columnwidth]{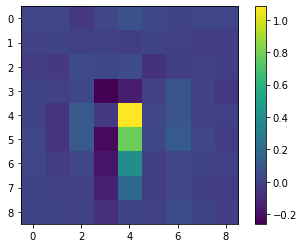}
\includegraphics[width=0.13\columnwidth]{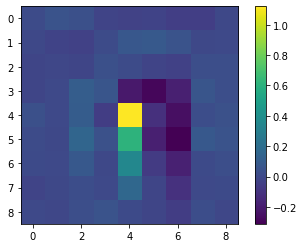}
\includegraphics[width=0.13\columnwidth]{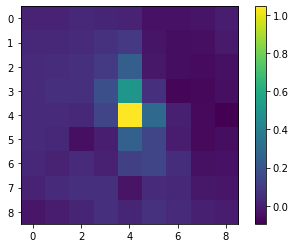}
\includegraphics[width=0.13\columnwidth]{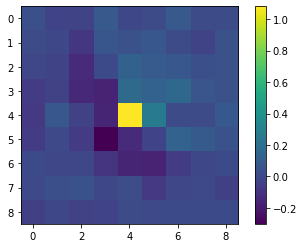}
\includegraphics[width=0.13\columnwidth]{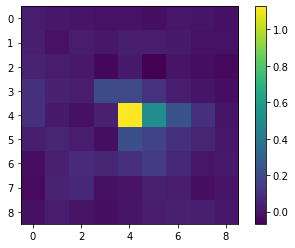}
\includegraphics[width=0.13\columnwidth]{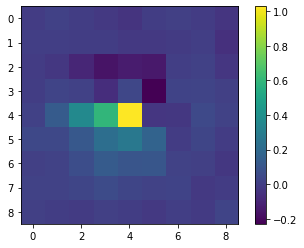}
\includegraphics[width=0.13\columnwidth]{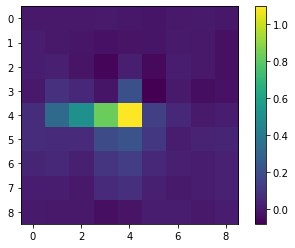}

\includegraphics[width=0.13\columnwidth]{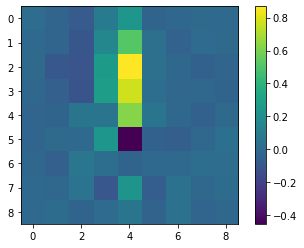}
\includegraphics[width=0.13\columnwidth]{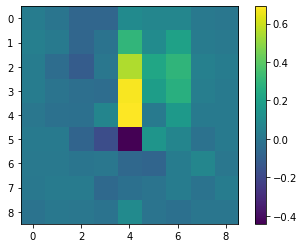}
\includegraphics[width=0.13\columnwidth]{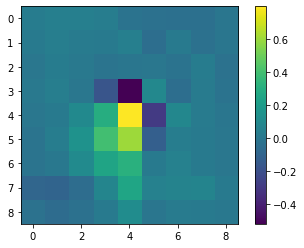}
\includegraphics[width=0.13\columnwidth]{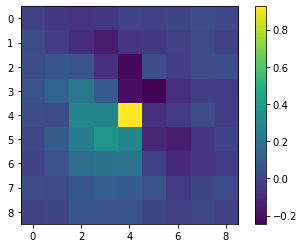}
\includegraphics[width=0.13\columnwidth]{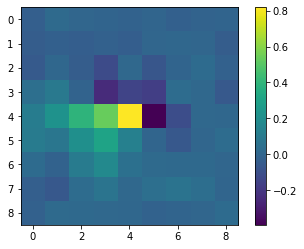}
\includegraphics[width=0.13\columnwidth]{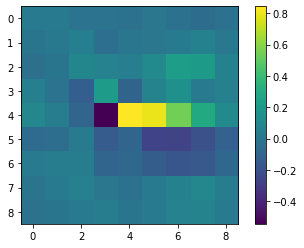}
\includegraphics[width=0.13\columnwidth]{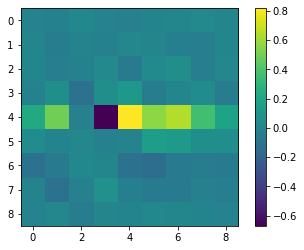}

\includegraphics[width=0.13\columnwidth]{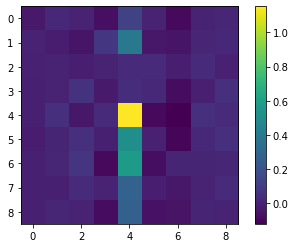}
\includegraphics[width=0.13\columnwidth]{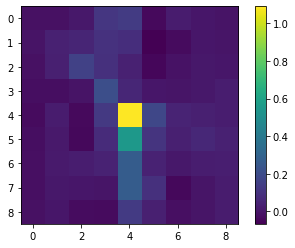}
\includegraphics[width=0.13\columnwidth]{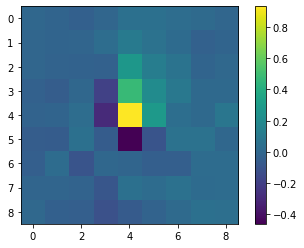}
\includegraphics[width=0.13\columnwidth]{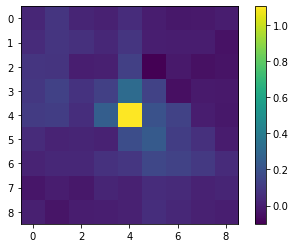}
\includegraphics[width=0.13\columnwidth]{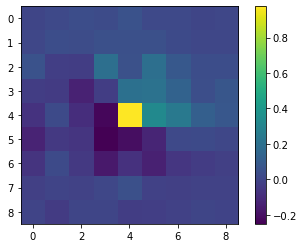}
\includegraphics[width=0.13\columnwidth]{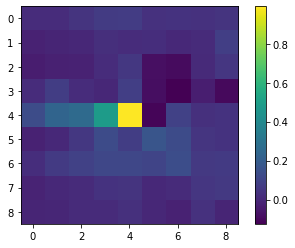}
\includegraphics[width=0.13\columnwidth]{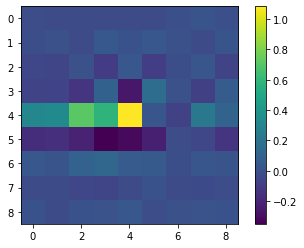}

\includegraphics[width=0.13\columnwidth]{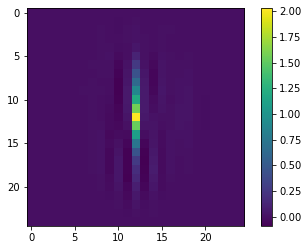}
\includegraphics[width=0.13\columnwidth]{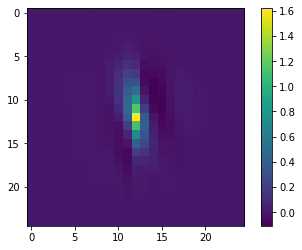}
\includegraphics[width=0.13\columnwidth]{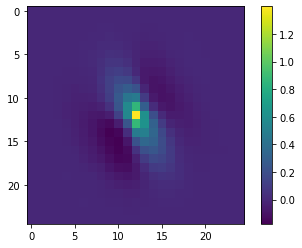}
\includegraphics[width=0.13\columnwidth]{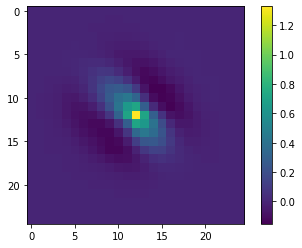}
\includegraphics[width=0.13\columnwidth]{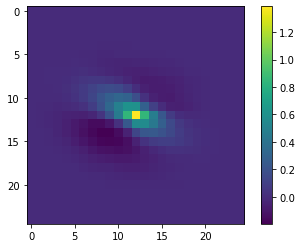}
\includegraphics[width=0.13\columnwidth]{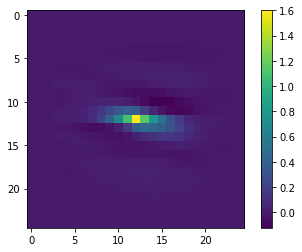}
\includegraphics[width=0.13\columnwidth]{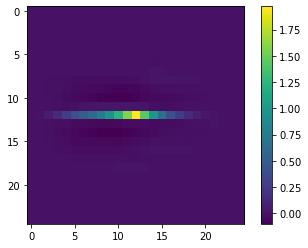}
   \caption{The patterns of the trained kernels on the problem \eqref{pde} where $\xi=100$ and $\theta \in \{ 0,\frac{\pi}{12},\frac{\pi}{6},\frac{\pi}{4},\frac{\pi}{3},\frac{5\pi}{12},\frac{\pi}{2}\}$. For each problem, the smoothers are trained using three kernels. The first three rows represent the kernels on the first layer, second layer and third layer respectively and the last row combines the three kernels into one single kernel for each problem.
}

\end{center}
\label{fig:patterns2}
\end{figure}

\section{Numerical experiments}

\label{sec:exp}
In this section, we provide numerical examples to demonstrate the smoothing effect of the proposed smoothers. All of the codes were implemented in PyTorch 1.8.1  and run on an Intel Core i7-6700 CPU. We use a batch size of 10 and employ the Adam optimizer 
with a learning rate of $10^{-3}$ for 500 epochs. The neural network training took roughly 5 hours for each constant coefficient problem and roughly 4 hours for each variable coefficient problem.
\subsection{Constant coefficient PDEs}
We first consider the following two dimensional  anisotropic rotated Laplacian problem:
\begin{equation}\label{pde}
    -\nabla \cdot (T\nabla u(x,y)) = f(x,y) 
\end{equation}
where $2\times 2$ tensor field $T$ is defined as
\begin{equation}
T=\begin{bmatrix}
\cos^{2}{\theta}+\xi\sin^{2}{\theta}&\cos{\theta}\sin{\theta}(1-\xi) \\ 
\cos{\theta}\sin{\theta}(1-\xi) & \sin^{2}{\theta}+\xi\cos^{2}{\theta}
\end{bmatrix},
\end{equation}
with $\theta$ being the angle of the anisotropy and $\xi$ being the conductivity.
We discretize the operators $\Delta u$ and $u_{xy}$ in \cref{pde} using the following  stencils: 
\begin{equation} \notag
\frac{1}{4h^{2}}\begin{bmatrix}&-1 &\\-1&4&-1\\&-1&\end{bmatrix} \quad \mbox{and}\quad 
\frac{1}{2h^{2}}\begin{bmatrix}&-1&1\\-1&2&-1\\1&-1&\end{bmatrix},
\end{equation}
where $h$ is the grid spacing.

We  use multigrid V-cycles to solve the resulting discretized linear system $Au=f$, where the coefficient matrix $A$ is parameterized with ($\theta$, $\xi$, $n$, $G$).  Here $n$ is the grid size and $G$ is the geometry of the grid.

We show the robustness and efficiency of the proposed neural smoothers on a variety of sets of parameters $(\theta,\xi,n,G)$. For each set of the parameters, 
we train the neural smoothers on dataset constructed on square domains with small grid size,
and show that the trained neural smoothers can outperform standard ones such as weighted Jacobi. Furthermore, we demonstrate that the trained neural smoothers can be applied to solve much larger problems and problems with more complex geometries without retraining.

Since our focus of this work is on smoothers, we adopt standard algorithms for multigrid coarsening and grid-transfer operators.
Specifically, we consider full coarsening, which is illustrated in \cref{fig:full_coarsening} for 2D grids, where
grid points are coarsened in both $x$- and $y$-dimensions.
The associated restriction and interpolation  are full weighting, a weighted average in $3\times 3$ neighborhood. 
The stencils of the restriction and interpolation operators are given by, respectively,
$$
\frac{1}{16}
\left[
\begin{array}{ccc}
1&2&1\\
2&4&2\\
1&2&1\\
\end{array}
\right]
\quad \mbox{and} \quad
\frac{1}{4}
\left]
\begin{array}{ccc}
1&2&1\\
2&4&2\\
1&2&1\\
\end{array}
\right[. 
$$

\begin{figure}[ht!]
\caption{Full coarsening of a 2D grid. Fine points are red and  coarse points are black.
Full weighting restriction is shown by the arrows to the coarse point at the center.
\label{fig:full_coarsening}}
\vspace{1cm}
\centering\includegraphics[width=0.4\columnwidth]{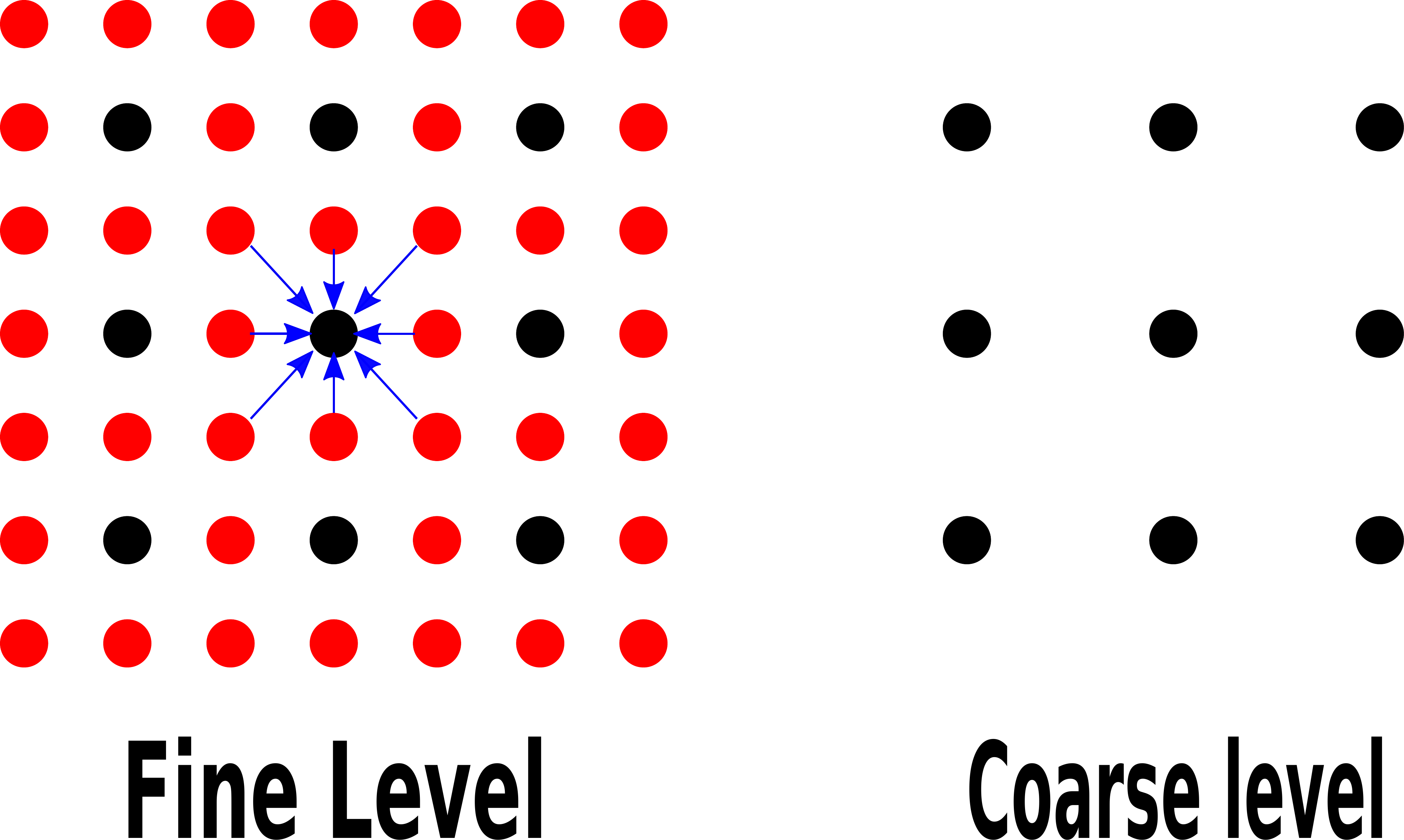}
\end{figure}

We also consider red-black coarsening that has a coarsening factor of about $2$ shown in \cref{fig:rb_coarsening} for 
the first $3$ levels. Note that the coarsening on level $0$ is essentially a semi-coarsening along the $45^{\circ}$ angle, and on level $1$
the coarsening is performed on the $45^{\circ}$-rotated meshes, which generates the grid on level $2$ that  amounts to a semi-coarsening
along the $y$-dimension.
The restriction and interpolation stencils used associated with this coarsening are given by 
$$
\frac{1}{8}
\left[
\begin{array}{ccc}
&1&\\
1&4&1\\
&1&\\
\end{array}
\right]
\quad \mbox{and} \quad
\frac{1}{4}
\left]
\begin{array}{ccc}
&1&\\
1&4&1\\
&1&\\
\end{array}
\right[
.$$

\begin{figure}[h]
\caption{Red-black coarsening of a 2D grid, where  red points are  fine points and  black points are  coarse points.
\label{fig:rb_coarsening}}
\vspace{1cm}
\centering\includegraphics[width=0.7\columnwidth]{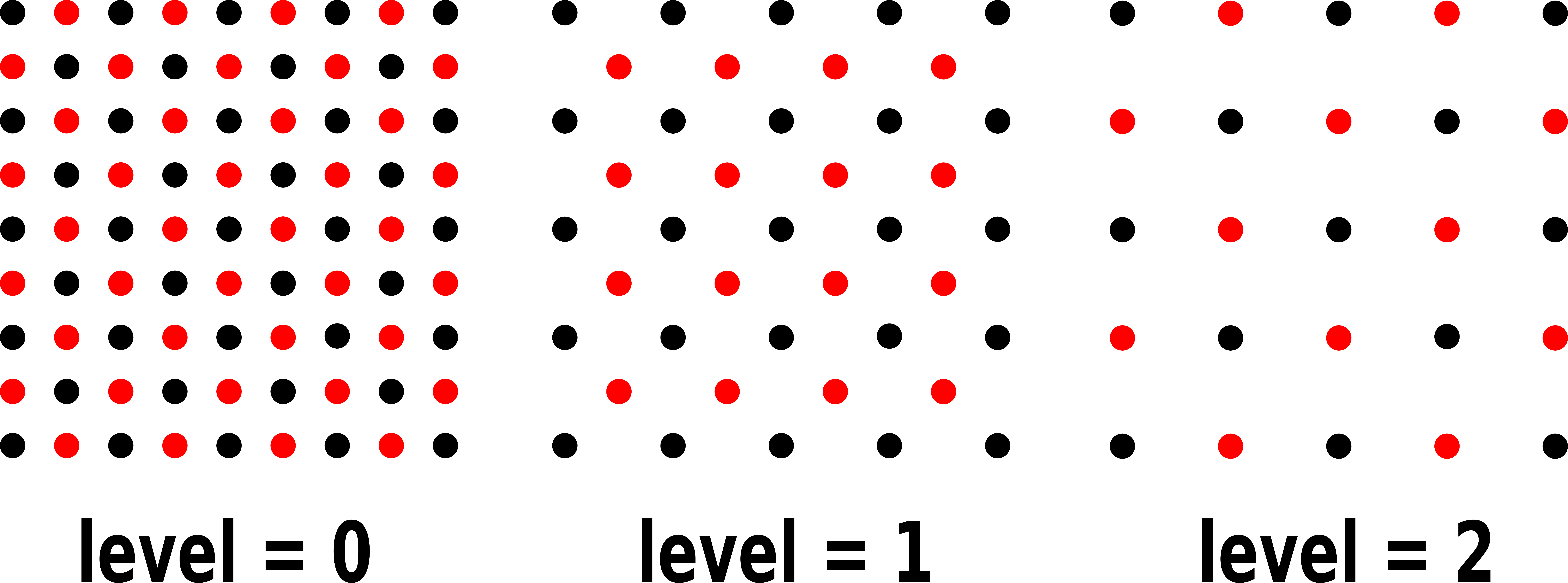}
\end{figure}

To evaluate our method, we compare the performance of multigrid methods using \cref{alg:multigrid} equipped with  convolutional neural smoothers  that are trained adaptively   (denoted by \textit{$\alpha$-CNN}), convolutional neural smoothers trained independently (denoted by \textit{CNN}) and weighted Jacobi smoother (denoted by \textit{$\omega$-Jacobi }) for solving a variety of linear systems. These problems are generated by varying the parameters ($\xi$, $\theta$,  $n$,  $G$). The weight $\omega$ is chosen to be $\frac{2}{3}$ by heuristics for all experiments in this paper.

\paragraph{Training details}
First, we train smoothers independently using the first strategy discussed in Section \ref{learning strategy}.  For each smoother, we construct $50$ problem 
instances of size $16^{2}$. 
Then, we use the adaptive training framework to train smoothers using \cref{alg:adap-train}.  
The training process for a 5-level multigrid has 4 stages. At each 
stage we construct a training data set which contains $50$ instances of the problem on each level. 
All stages have the same size of the coarsest grid. 
In particular, under full coarsening scheme, at stage $l$ the problems are constructed on the $(4-l)$th level and have grid size of $(2^{l+2}-1)^{2}$. 
Under red-black coarsening  scheme, at stage $1$ and stage $2$ the problem instances have size of $9^{2}$ and at stage $3$ and stage $4$ the problems have size of $17^{2}$. 
This is because when we apply 
red-black coarsening to a regular grid, the grid becomes irregular, 
therefore we need to add zeros to the irregular grid so that we can apply CNNs more efficiently.

\paragraph{Neural networks}

We use CNNs to approximate the action of the  inverse of the smoothers.
In particular,
under full coarsening scheme, for both CNN  and $\alpha$-CNN smoothers,   $H^{(l)}$ is parameterized as follows:
\begin{equation}\label{full-coarsening-smoother}
    H^{(l)}=f^{(l)}_{5}(f^{(l)}_4(\cdots (f^{(l)}_{2}(f^{(l)}_{1}))\cdots)+f^{(l)}_{6},
\end{equation}
where each $f^{(l)}_{i}$ is parameterized by a $3\times 3$ convolution kernel $\phi^{(l)}_{i}$. We initialize the weights of $\phi^{(l)}_{1},\dots,\phi^{(l)}_{5}$ with zeros  and  the weights of $\phi^{(l)}_{6}$ to be the inverse Jacobi stencil so that $H^{(l)}$ is initialized as  Jacobi. 
For red-black coarsening,  $H^{(l)}$ is parameterized as 
\begin{equation}\label{red-black-smoother}
    H^{(l)}= f^{(l)}_{2}(f^{(l)}_{1}).
\end{equation}
Note that we could use more convolutional layers and for each grid point we could also 
explore a larger range of the neighborhood, which can typically 
lead to a faster convergence rate at the price of more computational costs per iteration.
The current settings are found to give the best trade-off between convergence rate and time-to-solution.

\paragraph{Evaluation metrics}

We train the smoothers on problems with small grid sizes where the ground truth can be easily obtained. When we test on large-scale problems, it is time consuming to obtain the ground truth. Therefore when we evaluate the performance, we use the convergence threshold relative residual $\frac{\Vert f-A\hat{u}\Vert_2}{\Vert f\Vert_2}<10^{-6}$ as the stopping criterion which can avoid the requirement of exact solutions. 
We compare both the number of iterations and the runtime for multigrid solvers using different smoothers to reach the same accuracy. 
To reduce the effect of randomness, for each test problem, we run the multigrid solvers to solve $10$ problems with different random right-hand sides 
and present the averaged numbers.

\paragraph{Convergence rate}

Since coarser problems are usually better conditioned, the smoothers on the finest level have the biggest impact on the overall convergence. In this experiment we compare the spectral properties of the smoothers on the finest level.
We first compare the spectral radius of the iteration matrices \cref{eq:iteration} constructed by $\omega$-Jacobi smoothers  ($\omega$ is fixed at $\frac{2}{3}$ in all experiments) and $\alpha$-CNN smoothers and summarize the results in \cref{spectral-radius-1}.  These statistics are calculated on two sets of test problems defined on one $16\times 16$ grid.  In the first set,  $\theta$ is fixed as $0$ and $\xi=100,200,300,400$. In the second set,  $\xi$ is fixed at $100$ and $\theta = 0,\frac{\pi}{12},\frac{\pi}{6},\frac{\pi}{4}$. 
The corresponding comparison of ideal convergence bounds \cref{eq:beta_star} on these tests is provided in \cref{min-weak-approx}. 

\begin{table}[!htbp]
\centering
\centering
{
\begin{tabular}[t]{c|c|c|c|c}
\centering
$\theta=0$ &  $\xi=100$&   $\xi=200$ &  $\xi=300$&$\xi=400$\\\hline
$\omega$-Jacobi &  0.9886 & 0.9886   & 0.9886    & 0.9886 \\
Gauss-Seidel& 0.9662 & 0.9662 &0.9662 &0.9662  \\\hline
$\alpha$-CNN & 0.7660  & 0.8060 & 0.8588  & 0.7883     
\end{tabular}

\vspace{1em}
\centering
\begin{tabular}[t]{c|c|c|c|c}
$\xi=100$  & $\theta=0$&$\theta={\pi}/{12}$&$\theta={\pi}/{6}$&$\theta={\pi}/{4}$\\ \hline
$\omega$-Jacobi& 0.9886 &0.9913  & 0.9934  & 0.9942   \\ 
Gauss-Seidel&0.9662 &0.9735 &0.9797   & 0.9823\\
$\alpha$-CNN&  0.7660  & 0.7743  &  0.9652  & 0.9728  \\
\end{tabular}
}
\caption{Spectral radius of iteration matrices \cref{eq:iteration} of two-grid methods using full coarsening and $\omega$-Jacobi with $\omega=\frac{2}{3}$, Gauss-Seidel and $6$-layered $\alpha$-CNN smoothers 
for rotated Laplacian problems with different $\theta$ and $\xi$.
The  grid size is $16\times 16$.}
\label{spectral-radius-1}
\end{table}

\begin{table}[!htbp]
\centering
\begin{tabular}[t]{c|c|c|c|c}
$\theta=0$ &  $\xi=100$&   $\xi=200$ &  $\xi=300$&$\xi=400$\\\hline
$\omega$-Jacobi& 0.9886  & 0.9886   &  0.9886   & 0.9886 \\
Gauss-Seidel& 0.9675 &0.9675&0.9675  & 0.9675  \\
$\alpha$-CNN& 0.7660  & 0.8060 & 0.8588  &  0.7883    \\
\end{tabular}

\vspace{1em}

\begin{tabular}[t]{c|c|c|c|c}
$\xi=100$  & $\theta=0$&$\theta={\pi}/{12}$&$\theta={\pi}/{6}$&$\theta={\pi}/{4}$\\\hline
$\omega$-Jacobi & 0.9886   & 0.9913 & 0.9934 & 0.9942 \\ 
Gauss-Seidel&0.9675 &0.9748 & 0.9807  & 0.9833 \\
$\alpha$-CNN& 0.7660 & 0.7743 & 0.9651 & 0.9728\\
\end{tabular}
\caption{Ideal convergence bound \cref{eq:beta_star} corresponding to the same methods and problems in \cref{spectral-radius-1}.
}
\label{min-weak-approx}
\end{table}


The results in \cref{spectral-radius-1} and \cref{min-weak-approx} show that for each rotated Laplacian problem, 
the convergence measure associated with $\alpha$-CNN smoothers 
are much smaller than those associated with $\omega$-Jacobi smoothers and Gauss-Seidel smoothers
which indicates a faster convergence can be achieved by multigrid solvers equipped with $\alpha$-CNN smoothers.

We use the same problem setting as the above tables. We consider the iterative solvers $x_{k}=Gx_{k-1}$ where $G$ is the 5-level multigrid solver.
 We compare the spectral radius of the iteration matrices  $G$ of 5-level multigrid solvers equipped with different smoothers and summarize the results in \cref{spectral-radius-5}. The results show that the smoothers can not only efficiently smooth the finest level errors but also have faster convergence overall as a 5-grid solver compared to $\omega$-Jacobi and Gauss-Seidel. Since $\omega$-Jacobi smoothers are more scalable than Gauss-Seidel smoothers, we will only compare neural smoothers with $\omega$-Jacobi smoothers in the remaining section.

\begin{table}[!htbp]
\centering
{
\begin{tabular}[t]{c|c|c|c|c}
$\theta=0$ &  $\xi=100$&   $\xi=200$ &  $\xi=300$&$\xi=400$\\\hline
$\omega$-Jacobi &0.9853  &0.9918   &0.9940    &0.9951\\
 Gauss-Seidel&0.9564  &0.9755 &0.9820   & 0.9853  \\
$\alpha$-CNN & 0.6816 &0.8189 & 0.8805  &0.8936     
\end{tabular}

\vspace{1em}

\begin{tabular}[t]{c|c|c|c|c}
$\xi=100$  & $\theta=0$&$\theta={\pi}/{12}$&$\theta={\pi}/{6}$&$\theta={\pi}/{4}$\\ \hline
$\omega$-Jacobi &0.9853  & 0.9436 &0.8981 &0.8837\\ 
Gauss-Seidel& 0.9564 & 0.8566 &0.7776&0.7643 \\
$\alpha$-CNN& 0.6816 &0.4534  &0.4547 &0.4216\\
\end{tabular}
}
\caption{Spectral radius of the iteration matrices corresponding to the 5-level multigrid methods with full coarsening and $\omega$-Jacobi with $\omega=\frac{2}{3}$, Gauss-Seidel and $6$-layered $\alpha$-CNN smoother 
for rotated Laplacian problems with different $\theta$ and $\xi$.
The  mesh size is $16\times 16$.}
\label{spectral-radius-5}
\end{table}

\paragraph{Smoothing property}

To show that our proposed method can learn the optimal smoother with the best smoothing property, for each eigenvector $v$ (that has the unit 2-norm) 
of the fine-level operator $A$ 
associated with parameters $\theta=\frac{5\pi}{12}$,  $\xi=100$, $N=16$ on a square domain, 
we compute its convergence factor $\norm{v-H^{(0)}(Av)}_2$, where $H^{(0)}$ is the smoother on the finest level. An efficient smoother should lead to small convergence factors for eigenvectors associated with larger eigenvalues.
The results are shown in \cref{all-eigs}, where the eigenmodes are listed in the descending order of the
corresponding eigenvalues.
The CNN smoother can reduce low-frequency errors more rapidly than $\omega$-Jacobi, however,
both of them have comparable performance for damping high-frequency errors.  
In contrast, $\alpha$-CNN has the best performance, which exhibits a superior smoothing property as the convergence factors of eigenvectors associated the large eigenvalues are about $6$ times smaller than those associated with the other two smoothers.


\begin{figure}[h]
\begin{center}
\centerline{\includegraphics[width=0.7\columnwidth]{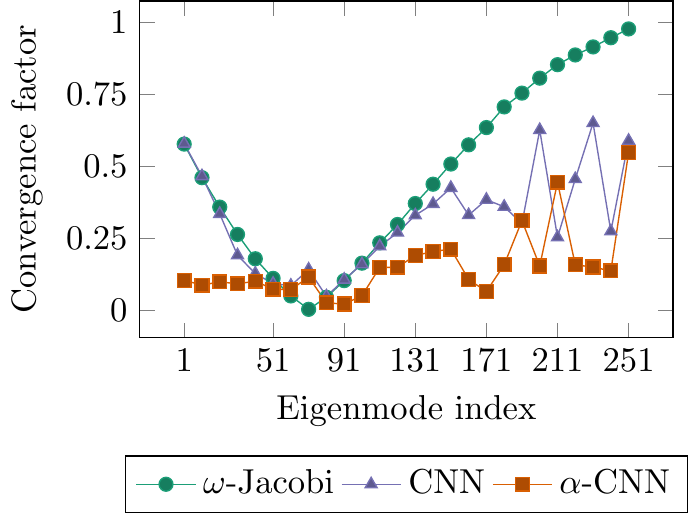}}
\caption{Convergence factors  of $\omega$-Jacobi with $\omega=\frac{2}{3}$, CNN and $\alpha$-CNN smoothers to the eigenvectors of 
$A$ for the rotated Laplacian on a $16\times 16$ grid, where $\theta=\frac{5\pi}{12}$ and $\xi=100$. 
The eigenvectors are sorted in the descending order of the corresponding eigenvalues. }
\label{all-eigs}
\end{center}
 \vskip -1.5em
\end{figure}

\paragraph{Generalization property}
To illustrate that our proposed method is useful, besides showing the statistics, we present the actual iteration numbers and runtime for multigrid solvers to converge. Also for a given PDE problem, we want to only train the neural smoothers once, that is, the neural smoothers need not to be retrained if we increase the grid size or change the geometry of the problem.
In this experiment, we first show that the trained smoothers can be generalized to different grid sizes without retraining. We fix the parameter of the problems to be  $\xi=100$ 
and $\theta=\frac{5\pi}{12}$ on one square domain. We show in \cref{exp1-fig-1} that for problems of size $1023^{2}$,  multigrid methods using $\alpha$-CNN smoothers 
converge faster in terms of the number of iterations than multigrid methods using CNN and  $\omega$-Jacobi smoothers by factors of
$1.5$ and $3.5$ respectively. 
Since the cost of applying $\alpha$-CNN 
smoothers is more than $\omega$-Jacobi, 
the time for iterations of multigrid methods using $\alpha$-CNN is only
faster than that using CNN and  $\omega$-Jacobi
by factors of $1.68$ and $2.1$, respectively.

\begin{figure}[h]
\begin{center}
\includegraphics[width=0.45\columnwidth]{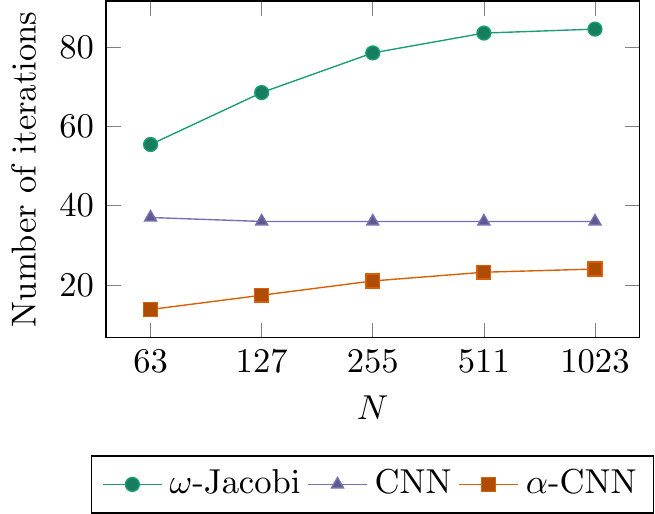} \hfill
\includegraphics[width=0.45\columnwidth]{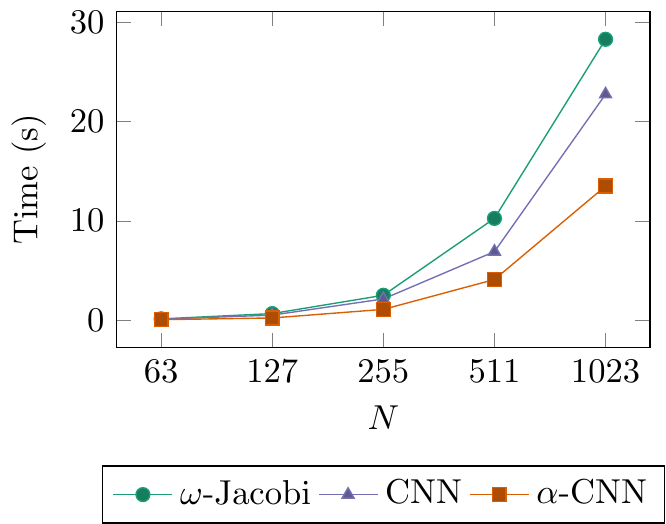}
\caption{Numbers of iterations and runtime required by multigrid with full coarsening to reach the convergence tolerance $10^{-6}$ for solving the rotated Laplacian problem of grid sizes $63^{2}$, $127^{2}$, $255^{2}$, $511^{2}$ and $1023^{2}$, with parameters $\xi=100$ and $\theta=\frac{5\pi}{12}$ on square domains.
}
\label{exp1-fig-1}
\end{center}
\end{figure}

Since CNN smoothers were trained independently, 
they are not as successful as $\alpha$-CNN to capture the smoothing property of reducing errors that cannot be reduced by lower levels of multigrid.

Hence, 
we only compare $\alpha$-CNN and $\omega$-Jacobi smoothers in the rest of the paper. 
Next we fix the parameters of the problems to be $\theta=\frac{\pi}{4}$,  $\xi=100$ and show that the trained 
$\alpha$-CNN smoothers can be generalized to problems with two different geometries (shown in \cref{domains}) without retraining. 

\begin{figure}[h]
  \centering
  \includegraphics[width=0.8\linewidth]{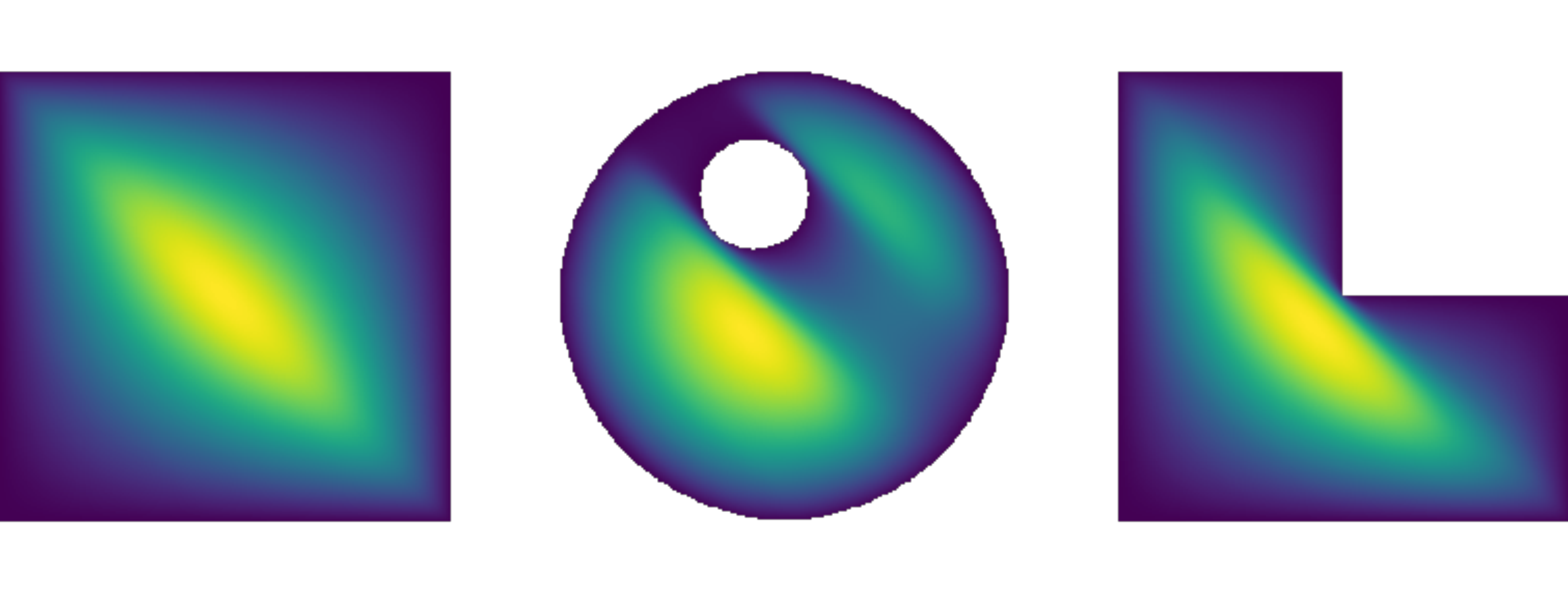}
  \caption{Ground truth solutions on square domain,  cylinder domain and L-shaped domain.}
  \label{domains}
\end{figure}

\begin{figure}[h]
\begin{center}
\includegraphics[width=0.45\columnwidth]{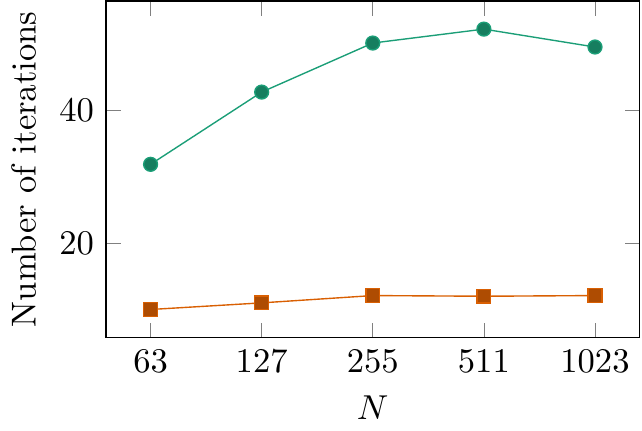} \hfill
\includegraphics[width=0.45\columnwidth]{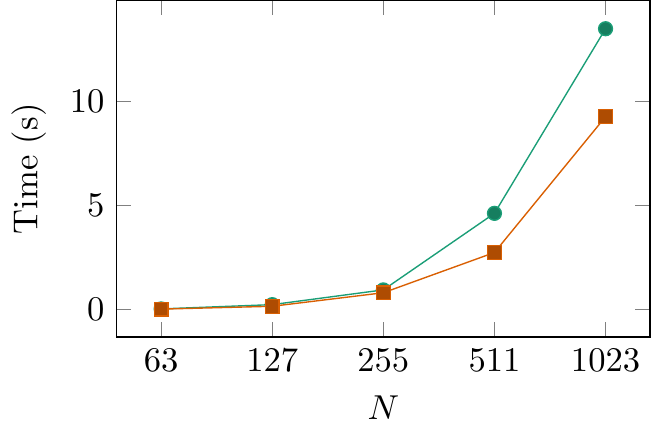} \\[0.25em]
\includegraphics[width=0.45\columnwidth]{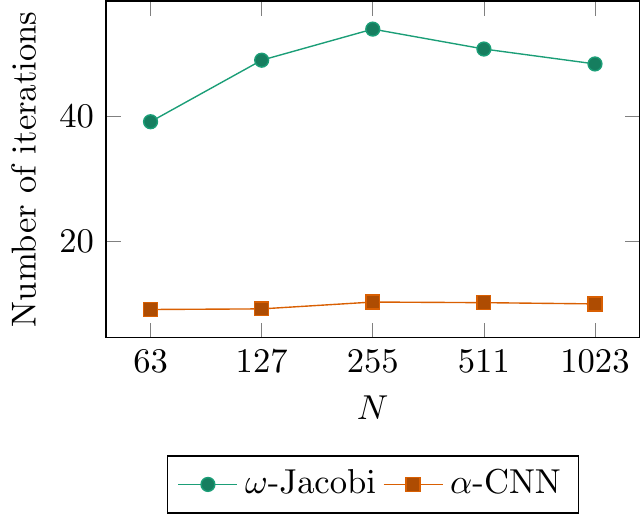} \hfill
\includegraphics[width=0.45\columnwidth]{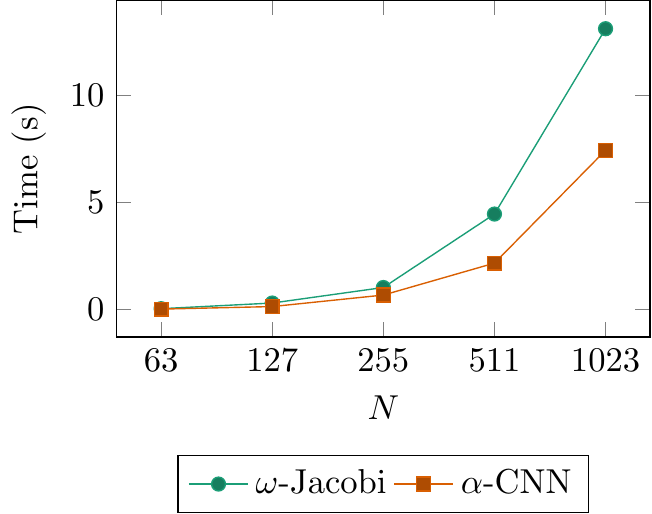}
\caption{
Numbers of iterations and runtime required by multigrid solvers for solving the rotated Laplacian problems with parameters $\theta=\frac{\pi}{4}$ and $\xi=100$ on the cylinder domain (top two figures) and the L-shaped domain (bottom two figures).}

\label{L-shape-fig1}
\end{center}
\end{figure}

The results for the two different domains are shown in \cref{L-shape-fig1}. 
We can see that since we are using the convolutional layers to approximate the inverse of the smoothers,  
$\alpha$-CNN  use the information in the neighborhood information to smooth the error at each grid point 
and therefore without retraining,  
the smoother trained on square domain  can still lead  multigrid methods to converge $4.1$ times 
faster in terms of the number of iterations and $1.5$ times faster in time-to-solution on the cylinder domain for problems of size $1023^{2}$.
On the L-shaped domain for the same sized problem, the performance improvement is
$4.9$ times and $1.8$ times faster in terms of the number of iterations and the time for iterations. 

We show in \cref{red-black-1} that our proposed method can learn optimized smoothers for a variety of problems 
given by different parameters on square domain and is not restricted to the choice of coarsening schemes in multigrid. 
In particular, for $\theta=\frac{5\pi}{12}$, with full coarsening,  the
multigrid method using $\alpha$-CNN smoothers is $19.2$ times faster in terms of the number of iterations and achieves 
a speedup of factor $4.4$ in the  time for iterations.
When red-black coarsening scheme is used, multigrid solver with $\alpha$-CNN smoothers 
can still require much fewer iterations than the one with $\omega$-Jacobi by $1.9$ times,
and converges about $1.3$ times faster in time.  

\begin{figure}[h]
\begin{center}
\includegraphics[width=0.45\columnwidth]{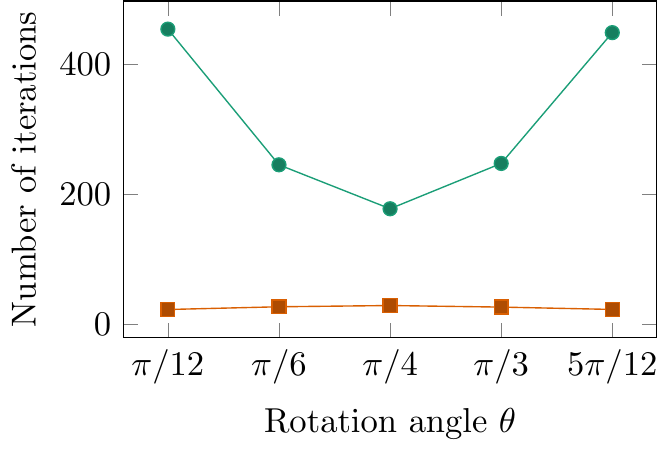} \hfill
\includegraphics[width=0.45\columnwidth]{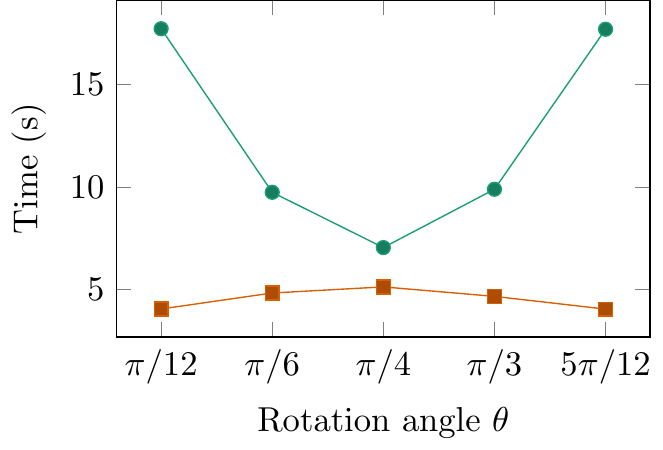} \\[0.25em]
\includegraphics[width=0.45\columnwidth]{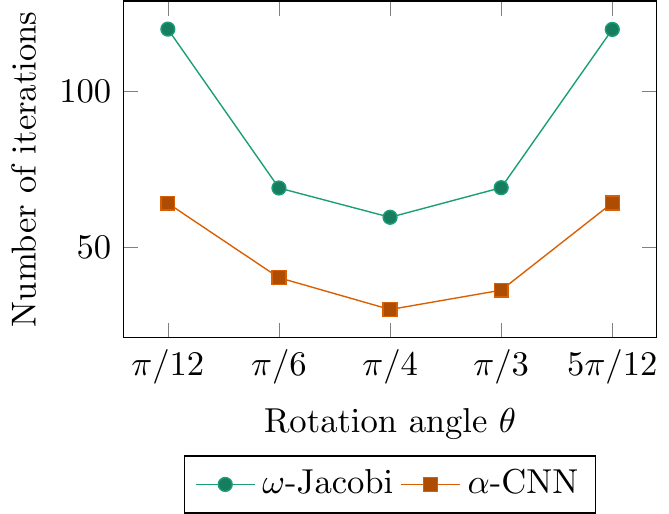} \hfill
\includegraphics[width=0.45\columnwidth]{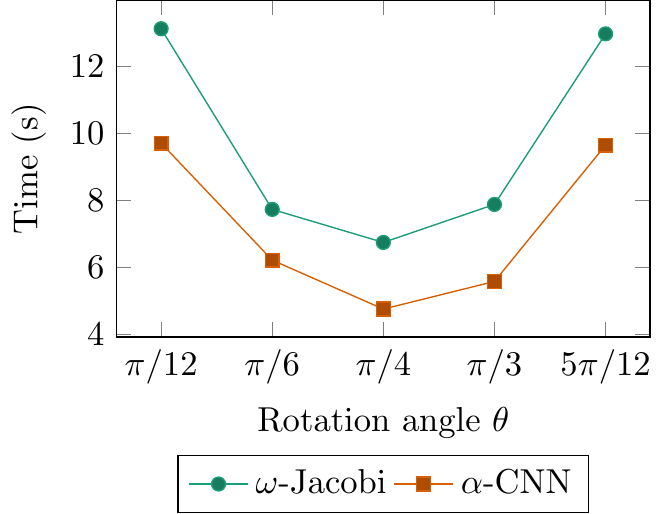}
\caption{Numbers of iterations  and runtime required by multigrid solvers for solving the rotated Laplacian problems of size $n=511^{2}$ with
 $\theta=[\frac{\pi}{12}, \frac{\pi}{6}, \frac{\pi}{4}, \frac{\pi}{3}, \frac{5\pi}{12}]$ and $\xi=100$.
The top two figures show the performance of multigrid with full coarsening and the bottom two figures show the performance of multigrid  with red-black coarsening. 
}
\label{red-black-1}
\end{center}
\end{figure}

Next, we show that we can learn a single smoother which works for all the problems discussed above. Instead of training a smoother for each problem individually, we construct a training set which contains the problems for $\theta=[\frac{\pi}{12}, \frac{\pi}{6}, \frac{\pi}{4}, \frac{\pi}{3}, \frac{5\pi}{12}]$ and $\xi=100$. We show in \cref{mixed} that the performance of using a single smoother for all the problems is slightly worse than training smoothers individually but still outperforms $\omega$-Jacobi.

\begin{figure}[h]
\begin{center}
\includegraphics[width=0.45\columnwidth]{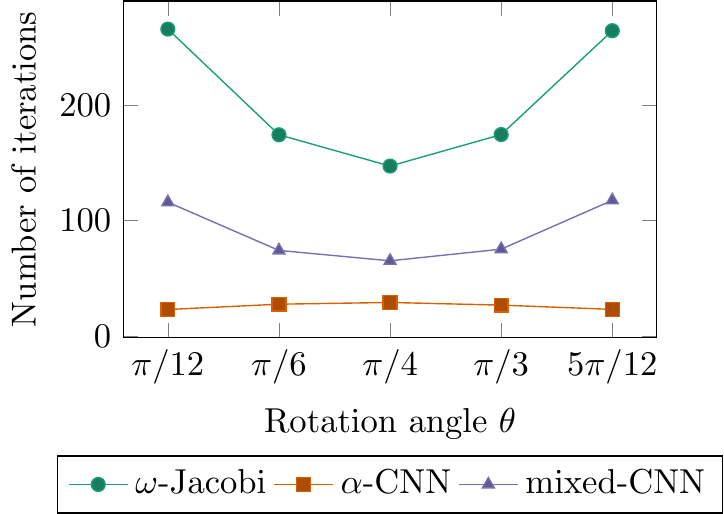} 

\caption{Numbers of iterations  and runtime required by multigrid solvers with full coarsening for solving the rotated Laplacian problems of size $n=511^{2}$ with
 $\theta=[\frac{\pi}{12}, \frac{\pi}{6}, \frac{\pi}{4}, \frac{\pi}{3}, \frac{5\pi}{12}]$ and $\xi=100$ when the smoother is trained from a dataset containing  problems with different  $\theta$ and $\xi$.  
}
\label{mixed}
\end{center}
\end{figure}

Finally, \cref{red-black-2} shows the performance of  a 5-level multigrid with $\omega$-Jacobi smoothers and $\alpha$-CNN smoothers using full coarsening and red-black coarsening with the same problem setting as in \cref{red-black-1}. However, since we are using 6 convolutional layers with full coarsening and 2 convolutional layers with red-black coarsening. For fair comparison in terms of computational cost per iteration, in this experiment we run 6 Jacobi steps each iteration for full coarsening and 2 Jacobi steps for red-black coarsening.  

\begin{figure}[h]
\begin{center}
\includegraphics[width=0.45\columnwidth]{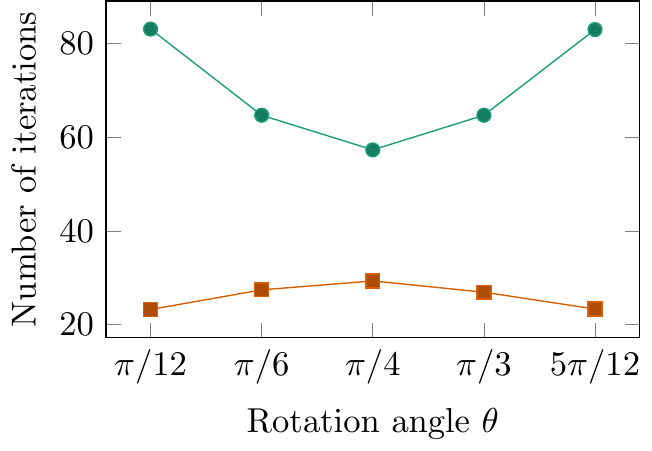} \hfill
\includegraphics[width=0.45\columnwidth]{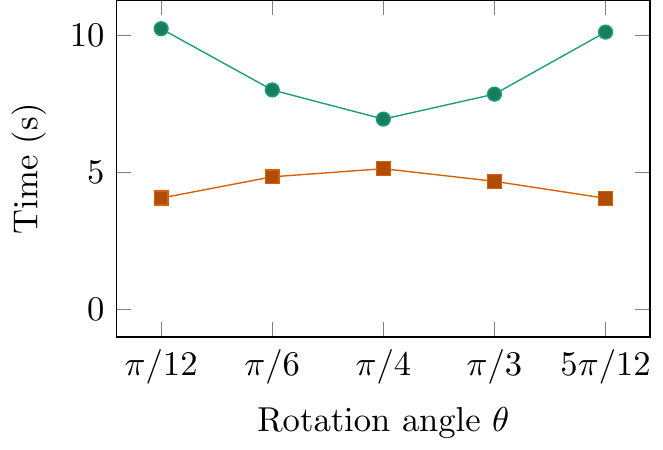} \\[0.25em]
\includegraphics[width=0.45\columnwidth]{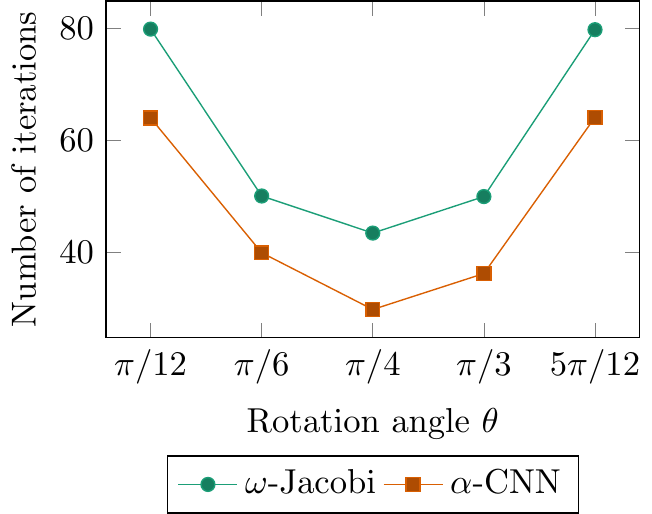} \hfill
\includegraphics[width=0.45\columnwidth]{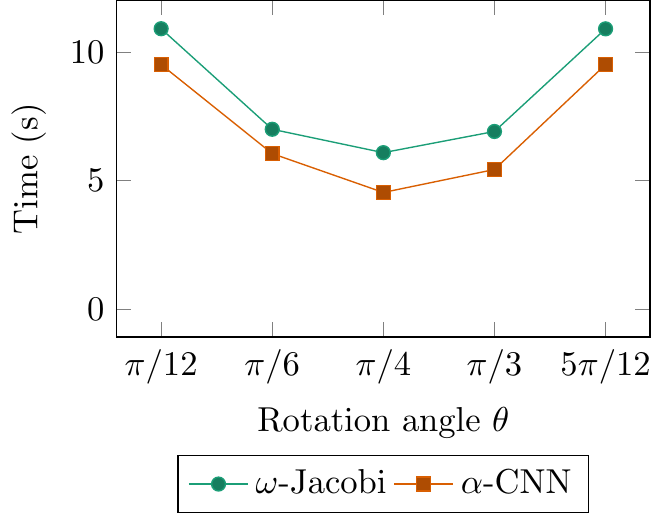}
\caption{Numbers of iterations  and runtime required by multigrid solvers for solving the rotated Laplacian problems of size $n=511^{2}$ with
 $\theta=[\frac{\pi}{12}, \frac{\pi}{6}, \frac{\pi}{4}, \frac{\pi}{3}, \frac{5\pi}{12}]$ and $\xi=100$.
The top two figures show the performance of multigrid with full coarsening and the bottom two figures show the performance of multigrid  with red-black coarsening. 
}
\label{red-black-2}
\end{center}
\end{figure}

\subsection{Variable coefficient PDEs}
We then consider the following variable coefficient problem:

$$-\nabla \cdot ((\sin{\kappa\pi xy}+1.1)\nabla u(x,y)) = f(x,y),$$
which is determined by the frequency $\kappa$.



In this experiment we consider solving the problems determined by $\kappa=0.1,1,10,$ and $100$. For each problem we consider a 4-level multigrid solver. 
We use the two approaches discussed in Section \ref{sec:VariableCoeff} to learn one single convolution kernel of size $3\times 3$ used for smoothing. We use 4 fully connected layers with 40 neurons for each layer for the first approach which has $6,800$ parameters to train in total.  We use 3 convolutional layers which has 7, 5 and 3 channels for each layer and a fully connected layer of size $27\times 9$ which has 378 parameters to train in total. We use Leaky ReLu activation function to perform a nonlinear mapping of the stencils to the smoother. We train the smoothers on problems of size $31\times 31$ and test the performance on problems of size $255\times 255$. 

We compare the performance of using different approaches for learning $\alpha$-CNN smoothers with weighted Jacobi and show the results in \cref{tbl:var-iter} and \cref{tbl:var-time}. The fully connected approach has similar performance in terms of both iteration number and runtime compared to the convolutional approach while having 17 times  more parameters. Both $\alpha$-CNN approaches can achieve $2\times$ speedup in terms of iteration number and $1.6\times$ speedup in terms of runtime.

\begin{table}
\centering
\begin{tabular}{c|c|c|c|c}
 & $k=0.1$ & $k=1$ & $k=10$ & $k=100$ \\ \hline
Convolutional  & 6 & 7&11 &30  \\
Fully connected& 6   &6  &10  & 28 \\\hline
$\omega$-Jacobi&  17  & 17 & 20 & 63 
\end{tabular}
\caption{Numbers of iterations required by multigrid solvers for solving the variable coefficient problems of size $n=255^{2}$ with
 $\kappa=0.1,1,10,100$ using $\alpha$-CNN and $\omega$-Jacobi with $\omega=\frac{2}{3}$.}
\label{tbl:var-iter}
\end{table}

\begin{table}
\centering
\begin{tabular}{c|c|c|c|c}
 & $k=0.1$ & $k=1$ & $k=10$ & $k=100$ \\ \hline
Convolutional  &  0.1016    & 0.1135  & 0.1514 & 0.3329 \\
Fully connected& 0.1027  & 0.1014  & 0.1417 & 0.3147 \\\hline
$\omega$-Jacobi&  0.1693  &  0.1671  & 0.1913  & 0.4997 
\end{tabular}
\caption{Run time required by multigrid solvers for solving the variable coefficient problems of size $n=255^{2}$ with
 $\kappa=0.1,1,10,100$ using $\alpha$-CNN and $\omega$-Jacobi with $\omega=\frac{2}{3}$.}
\label{tbl:var-time}
\end{table}



\subsection{Incorporation with FGMRES}
In this section we use multigrid solvers  as preconditioners of flexible GMRES on the same group of problems as in \cref{red-black-1}.  Notice that due to the use of nonlinear activation functions in the neural smoothers,  it is mandatory to use flexible GMRES instead of standard GMRES as the accelerator.  We compare the performance of using the $\alpha$-CNN smoothers trained before and using the $\omega$-Jacobi smoothers in terms of iteration numbers and running time. We show the results in \cref{tbl:fgmres} that using 
$\alpha$-CNN can achieve up to 3.36$\times$ improvement in terms of iteration number and up to 1.5$\times$ improvement in terms of time compare to $\omega$-Jacobi. 

\begin{table}[!htbp]
\centering
\begin{tabular}[t]{c|c|c|c|c|c}
$\xi=100$  &$\theta={\pi}/{12}$&$\theta={\pi}/{6}$&$\theta={\pi}/{4}$& $\theta={\pi}/{3}$&$\theta={5\pi}/{12}$\\ \hline
FGMRES with $\omega$-Jacobi& 37.0  & 30.2 & 28.0  &30.0 & 37.0 \\ 
FGMRES with $\alpha$-CNN&  11.0  & 12.0 & 13.0  & 12.0 &11.0 \\
\end{tabular}
\caption{Numbers of iterations required by preconditioned FGMRES to reach the convergence tolerance $10^{-6}$ for solving the rotated Laplacian problems with different $\theta$ and $\xi$. The grid size is $511^{2}$.}

\centering
\begin{tabular}[t]{c|c|c|c|c|c}
$\xi=100$  &$\theta={\pi}/{12}$&$\theta={\pi}/{6}$&$\theta={\pi}/{4}$& $\theta={\pi}/{3}$&$\theta={5\pi}/{12}$\\ \hline
FGMRES with $\omega$-Jacobi& 3.48  & 2.86 & 2.74  &2.65 & 3.46 \\ 
FGMRES with $\alpha$-CNN& 2.32   & 2.52 & 2.56    &2.47  & 2.29 \\
\end{tabular}
\caption{Run time required by preconditioned FGMRES to reach the convergence tolerance $10^{-6}$ for solving the rotated Laplacian problems with different $\theta$ and $\xi$. The grid size is $511^{2}$.}
\label{tbl:fgmres}
\end{table}

\section{Conclusion}
\label{sec:conclusion}
In this work we propose an efficient framework for training smoothers 
in the form of multi-layered CNNs that can be equipped by multigrid methods 
for solving linear systems arising from PDE problems.
The training process of the proposed smoothing algorithm, called $\alpha$-CNN, is guided by multigrid convergence theories and have
the desired property of minimizing  errors that cannot be efficiently annihilated by coarse-grid corrections.
Experiments on rotated Laplacian problems show superior smoothing property of $\alpha$-CNN smoothers
that leads to better performance of multigrid convergence when
combined with standard coarsening and interpolation schemes compared with
classical relaxation-based smoothers. 
We also show that well-trained $\alpha$-CNN smoothers  
on small problems can be generalized to problems of much larger sizes and 
different geometries without retraining. 
For future work,
we plan to use graph convolution networks to extend the current framework to unstructured meshes and study how to optimize other components  in multigrid solvers such as 
coarsening algorithms and grid transfer operators.

\section*{Acknowledgments}
We would like to acknowledge the fruitful discussions with the \textit{hypre} team at LLNL, which prompted the exploration of 
interpretability of the learned smoothers in
Section~\ref{inter}.

\bibliographystyle{siamplain}
\bibliography{deep_solver}

\begin{thebibliography}{10}

\bibitem{ultra_smooth}
{\sc A.~H. Baker, R.~D. Falgout, T.~V. Kolev, and U.~M. Yang}, {\em Multigrid
  smoothers for ultraparallel computing}, SIAM Journal on Scientific Computing,
  33 (2011), pp.~2864--2887.

\bibitem{doi:10.1137/0722038}
{\sc R.~E. Bank and C.~C. Douglas}, {\em Sharp estimates for multigrid rates of
  convergence with general smoothing and acceleration}, SIAM Journal on
  Numerical Analysis, 22 (1985), pp.~617--633.

\bibitem{berg2018unified}
{\sc J.~Berg and K.~Nystr{\"o}m}, {\em A unified deep artificial neural network
  approach to partial differential equations in complex geometries},
  Neurocomputing, 317 (2018), pp.~28--41.

\bibitem{https://doi.org/10.1002/pamm.201210311}
{\sc M.~Bolten and K.~Kahl}, {\em Using block smoothers in multigrid methods},
  PAMM, 12 (2012), pp.~645--646.

\bibitem{brandt1984algebraic}
{\sc A.~Brandt}, {\em Algebraic multigrid (amg) for sparse matrix eqations},
  Sparsity and its Applications,  (1984), pp.~257--284.

\bibitem{BRANDT198623}
{\sc A.~Brandt}, {\em Algebraic multigrid theory: The symmetric case}, Applied
  Mathematics and Computation, 19 (1986), pp.~23--56.

\bibitem{citeulike:13797291}
{\sc A.~Brandt, S.~McCormick, and J.~Ruge}, {\em Algebraic multigrid ({AMG})
  for sparse matrix equations}, in Sparsity and its Applications, D.~J. Evans,
  ed., Cambridge University Press, Cambridge, 1985, pp.~257--284.

\bibitem{amge}
{\sc M.~Brezina, A.~J. Cleary, R.~D. Falgout, V.~E. Henson, J.~E. Jones, T.~A.
  Manteuffel, S.~F. McCormick, and J.~W. Ruge}, {\em Algebraic multigrid based
  on element interpolation ({AMGe})}, SIAM Journal on Scientific Computing, 22
  (2001), pp.~1570--1592.

\bibitem{briggs2000multigrid}
{\sc W.~L. Briggs, V.~E. Henson, and S.~F. McCormick}, {\em A multigrid
  tutorial}, SIAM, 2000.

\bibitem{smash}
{\sc D.~Cai, E.~Chow, L.~Erlandson, Y.~Saad, and Y.~Xi}, {\em {SMASH:}
  structured matrix approximation by separation and hierarchy}, Numer. Linear
  Algebra Appl., 25 (2018).

\bibitem{chang2017multi}
{\sc B.~Chang, L.~Meng, E.~Haber, F.~Tung, and D.~Begert}, {\em Multi-level
  residual networks from dynamical systems view}, arXiv preprint
  arXiv:1710.10348,  (2017).

\bibitem{doi:10.1080/00207169008803864}
{\sc D.~J. Evans and W.~S. Yousif}, {\em The explicit block relaxation method
  as a grid smoother in the multigrid v-cycle scheme}, International Journal of
  Computer Mathematics, 34 (1990), pp.~71--78.

\bibitem{gamg}
{\sc R.~D. Falgout and P.~S. Vassilevski}, {\em On generalizing the algebraic
  multigrid framework}, SIAM Journal on Numerical Analysis, 42 (2004),
  pp.~1669--1693.

\bibitem{greenfeld2019learning}
{\sc D.~Greenfeld, M.~Galun, R.~Basri, I.~Yavneh, and R.~Kimmel}, {\em Learning
  to optimize multigrid pde solvers}, in International Conference on Machine
  Learning, PMLR, 2019, pp.~2415--2423.

\bibitem{doi:10.1137/S0895479893243876}
{\sc T.~Gudmundsson, C.~S. Kenney, and A.~J. Laub}, {\em Small-sample
  statistical estimates for matrix norms}, SIAM Journal on Matrix Analysis and
  Applications, 16 (1995), pp.~776--792.

\bibitem{haber2018learning}
{\sc E.~Haber, L.~Ruthotto, E.~Holtham, and S.-H. Jun}, {\em Learning across
  scales---multiscale methods for convolution neural networks}, in Proceedings
  of the AAAI Conference on Artificial Intelligence, vol.~32, 2018.

\bibitem{han2018solving}
{\sc J.~Han, A.~Jentzen, and E.~Weinan}, {\em Solving high-dimensional partial
  differential equations using deep learning}, Proceedings of the National
  Academy of Sciences, 115 (2018), pp.~8505--8510.

\bibitem{he2019mgnet}
{\sc J.~He and J.~Xu}, {\em Mgnet: A unified framework of multigrid and
  convolutional neural network}, Science china mathematics, 62 (2019),
  pp.~1331--1354.

\bibitem{holl2020learning}
{\sc P.~Holl, V.~Koltun, and N.~Thuerey}, {\em Learning to control pdes with
  differentiable physics}, arXiv preprint arXiv:2001.07457,  (2020).

\bibitem{hsieh2018learning}
{\sc J.-T. Hsieh, S.~Zhao, S.~Eismann, L.~Mirabella, and S.~Ermon}, {\em
  Learning neural {PDE} solvers with convergence guarantees}, in International
  Conference on Learning Representations, 2019.

\bibitem{katrutsa2017deep}
{\sc A.~Katrutsa, T.~Daulbaev, and I.~Oseledets}, {\em Deep multigrid: learning
  prolongation and restriction matrices}, arXiv preprint arXiv:1711.03825,
  (2017).

\bibitem{lagaris1998artificial}
{\sc I.~E. Lagaris, A.~Likas, and D.~I. Fotiadis}, {\em Artificial neural
  networks for solving ordinary and partial differential equations}, IEEE
  transactions on neural networks, 9 (1998), pp.~987--1000.

\bibitem{Lin2020}
{\sc P.~T. Lin, J.~N. Shadid, and P.~H. Tsuji}, {\em Krylov Smoothing for
  Fully-Coupled AMG Preconditioners for VMS Resistive MHD}, Springer
  International Publishing, Cham, 2020, pp.~277--286.

\bibitem{luz2020learning}
{\sc I.~Luz, M.~Galun, H.~Maron, R.~Basri, and I.~Yavneh}, {\em Learning
  algebraic multigrid using graph neural networks}, arXiv preprint
  arXiv:2003.05744,  (2020).

\bibitem{mishra2018machine}
{\sc S.~Mishra}, {\em A machine learning framework for data driven acceleration
  of computations of differential equations}, arXiv preprint arXiv:1807.09519,
  (2018).

\bibitem{RFB15a}
{\sc O.~Ronneberger, P.Fischer, and T.~Brox}, {\em U-net: Convolutional
  networks for biomedical image segmentation}, in Medical Image Computing and
  Computer-Assisted Intervention (MICCAI), vol.~9351 of LNCS, Springer, 2015,
  pp.~234--241.
\newblock (available on arXiv:1505.04597 [cs.CV]).

\bibitem{ruge1983algebraic}
{\sc J.~W. Ruge}, {\em Algebraic multigrid (amg) for geodetic survey problems},
  in Prelimary Proc. Internat. Multigrid Conference, Fort Collins, CO, 1983.

\bibitem{saadbook2}
{\sc Y.~Saad}, {\em Iterative Methods for Sparse Linear Systems}, Society for
  Industrial and Applied Mathematics, second~ed., 2003.

\bibitem{Saad2020IterativeMF}
{\sc Y.~Saad}, {\em Iterative methods for linear systems of equations: A brief
  historical journey}, arXiv: History and Overview,  (2020).

\bibitem{schmitt2019optimizing}
{\sc J.~Schmitt, S.~Kuckuk, and H.~K{\"o}stler}, {\em Optimizing geometric
  multigrid methods with evolutionary computation}, arXiv preprint
  arXiv:1910.02749,  (2019).

\bibitem{sirignano2018dgm}
{\sc J.~Sirignano and K.~Spiliopoulos}, {\em Dgm: A deep learning algorithm for
  solving partial differential equations}, Journal of computational physics,
  375 (2018), pp.~1339--1364.

\bibitem{sun2003solving}
{\sc M.~Sun, X.~Yan, and R.~Sclabassi}, {\em Solving partial differential
  equations in real-time using artificial neural network signal processing as
  an alternative to finite-element analysis}, in International Conference on
  Neural Networks and Signal Processing, 2003. Proceedings of the 2003, vol.~1,
  IEEE, 2003, pp.~381--384.

\bibitem{tang2017study}
{\sc W.~Tang, T.~Shan, X.~Dang, M.~Li, F.~Yang, S.~Xu, and J.~Wu}, {\em Study
  on a poisson's equation solver based on deep learning technique}, in 2017
  IEEE Electrical Design of Advanced Packaging and Systems Symposium (EDAPS),
  IEEE, 2017, pp.~1--3.

\bibitem{trottenberg2000multigrid}
{\sc U.~Trottenberg, C.~W. Oosterlee, and A.~Schuller}, {\em Multigrid},
  Elsevier, 2000.

\bibitem{DBLP:journals/actanum/Wathen15}
{\sc A.~J. Wathen}, {\em Preconditioning}, Acta Numer., 24 (2015),
  pp.~329--376.

\bibitem{wei2019general}
{\sc S.~Wei, X.~Jin, and H.~Li}, {\em General solutions for nonlinear
  differential equations: a rule-based self-learning approach using deep
  reinforcement learning}, Computational Mechanics, 64 (2019), pp.~1361--1374.

\bibitem{doi:10.1137/0910043}
{\sc G.~Wittum}, {\em On the robustness of ilu smoothing}, SIAM Journal on
  Scientific and Statistical Computing, 10 (1989), pp.~699--717.

\bibitem{xu_zikatanov_2017}
{\sc J.~Xu and L.~Zikatanov}, {\em Algebraic multigrid methods}, Acta Numerica,
  26 (2017), p.~591–721.

\bibitem{ZHOU2020787}
{\sc D.-X. Zhou}, {\em Universality of deep convolutional neural networks},
  Applied and Computational Harmonic Analysis, 48 (2020), pp.~787--794.

\end{thebibliography}

\end{document}